\date{}
\def\Aut{\operatorname{Aut}}
\def\dim{\operatorname{dim}}
\def\cl{\operatorname{cl}}
\theoremstyle{plain}
\newtheorem{theorem} {Theorem} [section]
\newtheorem{lemma} [theorem]{Lemma}
\newtheorem{proposition}[theorem]{Proposition}
\theoremstyle{definition}
\newcommand{\dynkinstep}{1.0cm}
\newcommand{\dynkinnode}[3]{\node at (\dynkinstep*#1, \dynkinstep*#2) {#3};}
\newcommand{\dynkinarrow}[4]
{\draw[->] (\dynkinstep*#1,\dynkinstep*#2) -- (\dynkinstep*#3,\dynkinstep*#4);}
 \title[Rigidity of smooth Schubert varieties]{Rigidity of smooth Schubert varieties \\ in a  rational homogeneous manifold\\ associated to a short root}
\author[J. Hong]{Jaehyun Hong}
\address{Korea Institute for  Advanced  Study, 85 Hoegiro, Dongdaemun-gu, Seoul 02455, Korea}
\email{jhhong00@kias.re.kr}
\author[M. Kwon]{Minhyuk Kwon}
\address{Department of Mathematical Sciences \\
Seoul National University\\
599 Gwanak-ro Gwanak-gu  Seoul 151-742, Korea}
\email{mu10836@snu.ac.kr}
\subjclass[2000]{Primary 14M15; Secondary 32G10}
 \keywords{Schubert varieties, horospherical varieties, rigidity}
\begin{document}

\begin{abstract}
We classify smooth Schubert varieties $S_0$ in a rational homogeneous manifold $S$  associated to a short root, and  show that they are rigid 
 in the sense that  any subvariety of $S$ having the same homology class as $S_0$ is induced by the action of $\Aut_0(S)$, unless $S_0$ is linear.
\end{abstract}

 \maketitle

\section{Introduction}

Let $S=G/P$ be a rational homogeneous manifold associated to a simple root $\alpha_k$.  
The identity component $\Aut_0(S)$ of the automorphism group  of $S=G/P$
is equal to $G$ excepting the cases where
$(G, \{\alpha_k\})$ is $(B_{\ell}, \{\alpha_{\ell}\})$,
   $(C_{\ell}, \{\alpha_1\})$ or $(G_2, \{\alpha_1\})$.
In these cases, we will think of $S=G/P$ as a rational  homogeneous manifold $G'/P'$ with $\Aut_0(S)=G' \supsetneq G$.
The ample generator of the Picard group of $S$ induces a $G$-equivariant embedding of $S$ into a projective space.

 Under the action of a Borel subgroup of $G$, $S$  has only finitely many orbits.  
These orbits  give  rise to  a cell decomposition of $S$, so that the homology space of $S$ is generated freely by the  homology classes of their closures, Schubert varieties. In particular, the homology class of a (complex) subvariety of $S$ is a linear combination of the homology classes of Schubert varieties with nonnegative coefficients.

Homogeneous submanifolds   associated to   subdiagrams   of the marked Dynkin diagram  of $S$ are smooth Schubert varieties of $S$, and these are all smooth Schubert varieties when $S$ is associated to a long root (Proposition 3.7 of \cite{HoM}). They are rigid except for certain linear spaces $S_0$ in a rational homogeneous manifold $S$ associated to  a short root.

 \begin{theorem}  [Theorem 1.1 of \cite{HoM}]  \label{known result I} Let $S=G/P$ be a rational homogeneous manifold associated to a   simple root  and let $S_0=G_0/P_0$ be a homogeneous submanifold   associated to a subdiagram $\mathcal D(S_0)$ of the marked Dynkin diagram  $\mathcal D(S)$ of $S$.
  Then   any subvariety of $S$ having the same homology class as $S_0$ is induced by the action of $\Aut_0(S)$,
  excepting when $(S, S_0)$ is given by
\begin{enumerate}
\item [\rm (a)] $S=(C_{n}, \alpha_k)$, $S_0=\mathbb P^{b-k }$, $ \Lambda=\{ \alpha_{k-1}, \alpha_b \}$,  $ 2 \leq k < b \leq n ${\rm ;}
\item [\rm (b)] $S=(F_4, \alpha_3)$,  $S_0=\mathbb P^3$ or $\mathbb P^1$,   $\Lambda=\{ \alpha_1, \alpha_4\}$  or   $ \{ \alpha_2, \alpha_4\}${\rm ;}
\item [\rm (c)] $S=(F_4, \alpha_4)$,  $ S_0=\mathbb P^2 $ or $\mathbb P^1$,  $\Lambda =\{\alpha_2\}$ or    $ \{ \alpha_3 \}$
\end{enumerate}
\noindent
where $\Lambda$  denotes  the set of simple roots in $\mathcal D(S) \backslash \mathcal D(S_0)$ which are adjacent to the subdiagram $\mathcal D(S_0)$.

\end{theorem}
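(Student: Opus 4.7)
The plan is to reduce the global rigidity statement to a local question about tangent spaces at smooth points of $X$, and then to settle that question by a prolongation computation on the Lie algebra $\mathfrak{g}=\operatorname{Lie}(G)$. Fix a base point $o\in S_0\subset S$ and identify $T_oS$ with $\mathfrak{g}_-=\bigoplus_{i<0}\mathfrak{g}_i$ via the $\mathbb{Z}$-grading of $\mathfrak{g}$ induced by $\alpha_k$; under this identification $T_oS_0$ becomes the graded subspace spanned by the negative root spaces whose roots are supported on $\mathcal D(S_0)$, a distinguished point of the relative Grassmannian $\mathrm{Gr}(\dim S_0,\,TS)$.

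First I would show that at a generic smooth point $x$ of a subvariety $X\subset S$ with $[X]=[S_0]$, the tangent space $T_xX$ lies in the $G$-orbit of $[T_oS_0]$. The idea is to pair $X$ with suitably translated opposite Schubert cycles of complementary dimension: by Kleiman transversality the intersection numbers equal those of $S_0$, and this numerical data, together with positivity of homology coefficients in the Schubert basis, forces the Gauss map $x\mapsto [T_xX]$ to take values in the required orbit at generic points of $X$.

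The second step is to show that this tangential condition determines $X$ up to the $G$-action. This is a first-order PDE system of finite type whose formal integrability is governed by the first Spencer/Tanaka prolongation of the graded subalgebra of $\mathfrak{g}$ attached to $\mathcal D(S_0)$. When the prolongation vanishes, a Frobenius-type argument together with Chow's theorem identifies the closure of the integral leaf through a generic point with a single $G$-translate of $S_0$; irreducibility of $X$ combined with a dimension count then yields equality.

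The hard part is the prolongation computation itself, which amounts to a Kostant-type cohomology calculation on the weight spaces of $\mathfrak{g}$ attached to roots involving the simple roots of $\Lambda$. Going through the simple Lie types case-by-case, I would check that the only nonzero contributions arise in the short-root situations (a), (b), (c), in which $\mathcal D(S_0)$ is of type $A$ (so $S_0$ is a projective space) and is joined to $\Lambda$ through a short-root edge; in each such case an extra weight space tilts $T_oS_0$ out of its $G$-orbit, producing genuine non-Schubert subvarieties with the correct homology class and thereby obstructing rigidity. The bulk of the work lies in this Dynkin-diagram classification, cross-referenced with the explicit Schubert calculus on the relevant minuscule and coadjoint homogeneous spaces.
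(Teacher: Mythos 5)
First, a structural point: the paper gives no proof of this statement at all --- it is imported verbatim as Theorem 1.1 of \cite{HoM}. The machinery the paper does set up (Propositions \ref{necessary conditions} and \ref{general case - inductive step}) reflects the method actually used in \cite{HoM}: one works with the variety of minimal rational tangents $\mathcal C_x(S_0)=\mathcal C_x(S)\cap \mathbb P(T_xS_0)$ rather than with the full tangent space, verifies the nondegeneracy and local rigidity conditions (I) and (II) for $\mathcal C_x(S_0)$ inside $\mathcal C_x(S)$, and then passes from local deformation rigidity to homological rigidity by degenerating an arbitrary cycle in the class $[S_0]$ to a $B$-stable cycle (necessarily the Schubert cycle $S_0$ itself, by positivity of the coefficients in the Schubert basis) and using that the $G$-orbit of $[S_0]$ in the relevant component of the cycle space is both open (by (I) and (II)) and closed (the stabilizer of $S_0$ is parabolic).

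Your proposal instead follows a Bryant--Walters-style exterior differential systems route, and its first step has a genuine gap. You assert that the intersection numbers of $X$ against opposite Schubert cycles, via Kleiman transversality, force the Gauss image $[T_xX]$ at a generic smooth point into the single $G$-orbit of $[T_oS_0]$. What that argument actually yields is only that $T_xX$ is an integral element of the \emph{Schur system} (it meets or avoids the various Schubert positions with the prescribed multiplicities); the claim that this coincides with the \emph{Schubert system} (the isotropy orbit closure of $[T_oS_0]$) is a separate, substantive comparison which fails in general and must itself be established case by case --- this is precisely the hard point in Walters' and Bryant's work even for compact Hermitian symmetric spaces. Moreover, for $S$ of non-cominuscule type (which includes every short-root case appearing in the exceptions (a)--(c)) the full tangent space at a point is too coarse an invariant: the orbit of $[T_oS_0]$ is not cut out by intersection-theoretic data, and first-order/first-prolongation information does not suffice. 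This is exactly why \cite{HoM} replaces $T_xX$ by $\mathcal C_x(X)$ and why the present paper carries out its entire analysis on the level of $\mathcal C_x(S)$. Your second step (prolongation vanishing, Frobenius, Chow) and your identification of the exceptional cases as linear spaces attached through short-root edges are in the right spirit, but without repairing the first step --- or replacing it by the VMRT and cycle-degeneration argument --- the proof does not go through.
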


On the other hand there are non-homogeneous smooth Schubert varieties when $S$ is associated to a short root. For example, an odd symplectic Grassmannian in the symplectic Grassmannian $Gr_{\omega}(k,V)$, which was  introduced in (\cite{Mi}), is a smooth Schubert variety but is not homogeneous.
Here, $(V, \omega)$ is a complex vector space of dimension $2n$ with a symplectic form $\omega$ and $Gr_{\omega}(k,V)$ is the variety consisting of $\omega$-isotropic $k$-subspaces of $V$.
Fix an isotropic flag $F_{\bullet}:F_0  \subsetneq F_1 \subsetneq \dots \subsetneq F_{2n} =V$.  The subvariety  $Gr_{\omega}(k,V;F_a, F_{2n-1-a})$  of $Gr_{\omega}(k,V)$ consisting of $\omega$-isotropic subspaces of $V$, which contain $F_a$ and which are contained in $F_{2n-1-a}$, is called    an odd symplectic Grassmannian.  A smooth Schubert variety  of the symplectic Grassmannian is either a homogeneous submanifold associated to a subdaigram of the marked Dynkin diagram of $Gr_{\omega}(k,V)$, an odd symplectic Grassmannian, or a linear space (Theorem 1.2 of \cite{HoC}). Furthermore, an odd symplectic Grassmannian $S_0=Gr_{\omega}(k,V; F_a, F_{2n-1-a})$ for
  $0 \leq a \leq k-2$ is rigid, in the same sense as in Theorem \ref{known result I}, that is, any subvariety of $S=Gr_{\omega}(k,V)$ having the same homology class as $S_0$ is induced by the action of $\Aut_0(S)=\mathbb P Sp (V, \omega)$.  (Theorem 1.2 of \cite{HoM}). \\

  In this paper we will extend these results to other pair $(S,S_0)$ consisting of a rational homogeneous manifold $S$ associated to a short root and a   smooth Schubert variety $S_0$ of $S$. For the history and background of this kind of  rigidity problem, see \cite{HoM}.   Linear spaces of $S$ are classified in (\cite{LM}): a connected component of the space of linear spaces in $S$ corresponds to a linear Schubert variety of $S$. Some connected components have more than one $G$-orbits, i.e., for some linear Schubert varieties of  $S$ there is a deformation in $S$ which is not obtained by the action of $G$ (For details see \cite{LM}).
From now on, we will focus on non-linear smooth Schubert varieties.

\begin{theorem} \label{theorem main}
 Let $S=G/P$ be a rational homogeneous manifold of type $(F_4, \alpha_3)$ or of type $(F_4, \alpha_4)$. Then a non-linear smooth Schubert variety $S_0$ of $S$ is either a homogenous submanifold associated to a subdiagram of the Dynkin diagram of  $S$ or a horospherical variety embedded into $S$ of the following form:

 \begin{enumerate} 
 \item[\rm(1)] $S_0=(C_2, \alpha_2, \alpha_1)$ and $S=(F_4, \alpha_3)$;
 \item[\rm(2)] $S_0 =(B_3, \alpha_2, \alpha_3)$ and $S=(F_4, \alpha_3)$.
 \end{enumerate} 
 Furthermore,  
 any subvariety of $S$ having the same homology class as $S_0$ is induced by the action of $\Aut_0(S)$.
\end{theorem}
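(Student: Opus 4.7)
My plan divides into a classification step and a rigidity step.

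For the classification, I would enumerate Schubert varieties of $S=F_4/P_k$ ($k=3,4$) via minimal-length representatives of $W_{F_4}/W_{P_k}$, and test each for smoothness using the Carrell--Peterson criterion or by direct tangent-space computation at $T$-fixed points as in \cite{HoC}. After discarding linear Schubert varieties and the homogeneous submanifolds coming from subdiagrams of $\mathcal D(S)$ (which are smooth by the results quoted in the introduction), the remaining smooth Schubert varieties should be precisely the two stated horospherical varieties in $(F_4,\alpha_3)$ with nothing extra in $(F_4,\alpha_4)$. Identifying these as the Pasquier horospherical varieties of types $(C_2,\alpha_2,\alpha_1)$ and $(B_3,\alpha_2,\alpha_3)$ would be done by computing the $G$-orbit decomposition of each candidate and matching it to Pasquier's classification of smooth projective horospherical varieties of Picard rank one.

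For rigidity, the homogeneous $S_0$ from subdiagrams are handled immediately by Theorem \ref{known result I}, since the exceptional cases (b), (c) there involve only linear $S_0$ and are therefore outside our hypothesis. For the two horospherical cases, let $X\subset S$ be irreducible with $[X]=[S_0]$. The strategy is: (i) intersection with a Schubert cycle dual to $[S_0]$ forces $X$ to meet the open Bruhat cell of a generic $G$-translate of $S_0$, producing a smooth point $x\in X$; (ii) a tangent-space comparison at $x$, using the horospherical structure of $S_0$ and the isotropy representation on $T_xS$, shows that $T_xX$ is a $G$-translate of $T_oS_0$; (iii) the resulting family of tangent spaces $\{T_xX\}_{x\in X}$ defines a distribution on $X$ whose integration, via the Cartan--Fubini rigidity machinery applied to the VMRT of $S$, forces $X=g\cdot S_0$ for some $g\in G$.

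The main obstacle will be step (ii). Unlike the homogeneous case, the tangent space $T_yS_0$ of a horospherical $S_0$ is not constant along $S_0$: it differs between the open and closed $G$-orbits in $S_0$, with distinct weight decompositions under the respective isotropies. I anticipate using the explicit weight structure of the tangent representations of $(C_2,\alpha_2,\alpha_1)$ and $(B_3,\alpha_2,\alpha_3)$, together with the VMRT structure of $F_4/P_3$, to force $T_xX$ into the correct $G$-orbit. Once this matching is established at a generic $x$, the integration step via Cartan--Fubini should proceed along the lines already developed in \cite{HoM} for odd symplectic Grassmannians, with the horospherical weight-data providing the extra rigidity needed to rule out non-trivial deformations.
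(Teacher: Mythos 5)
Your classification step takes a genuinely different route from the paper. The paper never enumerates Weyl group elements; it works entirely through the variety of minimal rational tangents: Proposition \ref{necessary conditions} forces $\mathcal C_x(S_0)$ at a general point to be a smooth $(L\cap B)_x$-orbit closure in $\mathcal C_x(S)$, and the classification is then read off from the explicit orbit analysis in the $\mathbb Q^4$-bundle $\mathcal C_o(S)$ (Lemma \ref{classification in Q4}, Proposition \ref{classification of vmrt}), with the $(F_4,\alpha_4)$ case handled by realizing $S$ as a hyperplane section of $(E_6,\alpha_6)$ and inducting on VMRTs. Your combinatorial enumeration is viable in principle (the paper itself notes that Polo's tangent-space formula would settle the $(F_4,\alpha_4)$ case, and \cite{RiSl} carry out exactly such a combinatorial classification), but two caveats: since $F_4$ is not simply laced, rational smoothness does not imply smoothness, so the Carrell--Peterson criterion alone is insufficient and you must fall back on the tangent-space computation; and your plan to identify the candidates with Pasquier's list by ``computing the $G$-orbit decomposition'' is misstated --- a Schubert variety is not $G$-stable, and the horospherical structure is with respect to the Levi factor of $\mathrm{Stab}_G(S_0)$, not $G$. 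What the geometric route buys, as the paper emphasizes, is that the same computation simultaneously delivers the rigidity statement.

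The rigidity step has a genuine gap. Your step (ii) compares \emph{tangent spaces} $T_xX$ with $T_oS_0$, but knowing $T_xX$ is a translate of $T_oS_0$ determines neither $X$ nor the input that the Cartan--Fubini machinery actually requires, namely the variety of minimal rational tangents $\mathcal C_x(X)\subset\mathbb P(T_xS)$ --- the tangent directions of \emph{lines} of $S$ lying on $X$ through $x$. The heart of the paper's argument is Proposition \ref{general case - inductive step} together with Proposition \ref{F4alpha3}: one must verify condition (I) (recovering the isotropy element from tangency of VMRTs) and, crucially, condition (II), that every local deformation of $\mathcal C_x(S_0)$ \emph{inside} $\mathcal C_x(S)$ is induced by the isotropy group $P_x$. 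This is where the real work lies, carried out via the classification of $\mathbb P^2$'s in $\mathbb Q^4$ and their $SL(E)\ltimes E^*$-orbits (Lemma \ref{deformtion of P2 in Q4}) and the holomorphicity argument forcing the fiberwise parameter $\lambda$ to be constant over $\mathbb P(Q)$. Your proposal never identifies this deformation-rigidity-of-the-VMRT condition, and the weight-theoretic tangent comparison you anticipate cannot substitute for it: two subvarieties can share a tangent space at a point without one being a deformation of the other, and the passage from $[X]=[S_0]$ to control of $\mathcal C_x(X)$ goes through the lines covering $X$ (degree and specialization arguments as in \cite{HoM}), not through an intersection with a dual Schubert cycle. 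Your instinct that the non-homogeneity of $S_0$ is the main obstacle is right, but the paper resolves it by working at a general point of the open $\mathrm{Stab}_G(S_0)$-orbit and analyzing the VMRT there, not by tracking how $T_yS_0$ varies between orbits.
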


Together with Theorem 1.2 of \cite{HoC} and  Theorem 1.2 of \cite{HoM} for the case where $S$ is the symplectic Grassmannian $Gr_{\omega}(k,V)$, which are explained in the above, we get the following result.

\begin{theorem}    \label{Theorem 2} Let $S=G/P$ be a rational homogeneous manifold associated to a short  root. Then a non-linear smooth Schubert variety $S_0$ of $S$ is either a homogenous submanifold associated to a subdiagram of the Dynkin diagram of  $S$ or a horospherical variety embedded into $S$ of the following form:

 \begin{enumerate}
 \item[\rm(1)] $S_0=(C_m, \alpha_{i+1}, \alpha_i)$ and $S=(C_n, \alpha_k)$, $2 \leq m \leq n$ and $1 \leq i \leq m-1$ and $n-k=m-i$;
 \item[\rm(2)] $S_0=(C_2, \alpha_2, \alpha_1)$ and $S=(F_4, \alpha_3)$;
 \item[\rm(3)] $S_0 =(B_3, \alpha_2, \alpha_3)$ and $S=(F_4, \alpha_3)$.
 \end{enumerate}

\noindent
In particular, any smooth Schubert varieties of $S$ is linear when $S$ is of type $(F_4, \alpha_4)$.

\end{theorem}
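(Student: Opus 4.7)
The plan is to deduce Theorem~\ref{Theorem 2} by combining Theorem~\ref{theorem main} with two parallel classification/rigidity results already in the literature, treating the three families of short-root rational homogeneous manifolds one at a time. First I would enumerate which $S=G/P$ actually appear: under the $\Aut_0(S)$ convention set in the introduction, the short-root cases $(B_\ell,\alpha_\ell)$ and $(G_2,\alpha_1)$ are reclassified as long-root types $(D_{\ell+1},\alpha_{\ell+1})$ and $(B_3,\alpha_1)$, for which Proposition~3.7 of \cite{HoM} classifies the smooth Schubert varieties and Theorem~\ref{known result I} gives their rigidity. The genuine short-root families still to be analyzed are therefore $S=(C_n,\alpha_k)$ with $1\le k\le n-1$, together with $S=(F_4,\alpha_3)$ and $S=(F_4,\alpha_4)$.

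For the symplectic Grassmannian $S = Gr_\omega(k,V) = (C_n,\alpha_k)$, I would quote Theorem~1.2 of \cite{HoC}: every smooth Schubert variety is a homogeneous submanifold associated to a subdiagram, an odd symplectic Grassmannian $Gr_\omega(k,V;F_a,F_{2n-1-a})$, or a linear space. The odd symplectic Grassmannian is precisely Pasquier's horospherical variety $(C_m,\alpha_{i+1},\alpha_i)$, with $m$ and $i$ determined by the data $k,n,a$ so that $n-k=m-i$, which produces case~(1) of the theorem. Rigidity of the odd symplectic Grassmannians is then Theorem~1.2 of \cite{HoM}, while rigidity of the homogeneous submanifolds in the non-linear range is Theorem~\ref{known result I}. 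For $S=(F_4,\alpha_3)$ and $S=(F_4,\alpha_4)$ I would invoke Theorem~\ref{theorem main} directly: it supplies cases~(2) and (3), shows that no horospherical example occurs for $(F_4,\alpha_4)$, and establishes rigidity for each.

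The final clause, that every smooth Schubert variety in $(F_4,\alpha_4)$ is linear, drops out of Theorem~\ref{theorem main} combined with an inspection of the connected subdiagrams of the marked Dynkin diagram of $(F_4,\alpha_4)$ containing $\alpha_4$: Theorem~\ref{theorem main} rules out any non-homogeneous non-linear example, so the only non-linear candidates could come from such subdiagrams, and these are enumerated directly. The only genuinely new input in the whole argument is Theorem~\ref{theorem main}; once it is in hand, Theorem~\ref{Theorem 2} is essentially a packaging statement, and the main obstacle in the overall program therefore lies inside the proof of Theorem~\ref{theorem main}, which occupies the body of the paper.
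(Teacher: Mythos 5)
Your proposal is correct and follows essentially the same route as the paper, which deduces Theorem~\ref{Theorem 2} in one line by combining Theorem~\ref{theorem main} with Theorem~1.2 of \cite{HoC} and Theorem~1.2 of \cite{HoM} for the symplectic Grassmannian, the remaining short-root cases having been absorbed into long-root types via the $\Aut_0(S)$ convention. Your extra details --- the explicit dictionary $n-k=m-i$ between odd symplectic Grassmannians and the notation $(C_m,\alpha_{i+1},\alpha_i)$, and the inspection of subdiagrams of $(F_4,\alpha_4)$ for the final clause (which the paper instead gets from the stronger Proposition~\ref{classification F4 alpha4}) --- are consistent with the paper and fill in the packaging correctly.
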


  \begin{theorem}  \label{Theorem 1} Let $S=G/P$ be a rational homogeneous manifold associated to a  short simple root and let $S_0$ be a non-linear smooth Schubert variety of $S$. Then  any subvariety of $S$ having the same homology class as $S_0$ is induced by the action of $\Aut_0(S)$.
 \end{theorem}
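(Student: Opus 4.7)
The plan is to reduce Theorem~\ref{Theorem 1} to a case-by-case application of rigidity results already in place, using the classification of non-linear smooth Schubert varieties supplied by Theorem~\ref{Theorem 2}. First I would invoke Theorem~\ref{Theorem 2}: after noting that the type $(F_4, \alpha_4)$ case is vacuous (every smooth Schubert variety there is linear), any non-linear smooth Schubert variety $S_0 \subset S$ is either a homogeneous submanifold associated to a subdiagram of the marked Dynkin diagram of $S$, or one of the three explicit horospherical varieties listed there. So the proof splits into two main branches.

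Branch one: $S_0$ is homogeneous. Apply Theorem~\ref{known result I}, which supplies rigidity for all such pairs $(S, S_0)$ except the three exceptional families (a), (b), (c). The key observation is that in each of those exceptions $S_0$ is a projective space ($\mathbb P^{b-k}$, $\mathbb P^{1}$, $\mathbb P^{2}$, or $\mathbb P^{3}$), hence linear; the non-linearity hypothesis therefore rules them out, and Theorem~\ref{known result I} delivers the conclusion.

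Branch two: $S_0$ is one of the three horospherical varieties. The case $(C_m, \alpha_{i+1}, \alpha_i) \hookrightarrow (C_n, \alpha_k)$ with $n - k = m - i$ corresponds, after translation from the horospherical description to the flag description, to an odd symplectic Grassmannian $Gr_{\omega}(k, V; F_a, F_{2n-1-a})$ with $0 \le a \le k-2$; rigidity here is exactly Theorem~1.2 of \cite{HoM}. The two remaining horospherical cases, both embedded in $(F_4, \alpha_3)$, are precisely what Theorem~\ref{theorem main} of the present paper establishes.

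The main obstacle does not lie in the present argument, which is essentially a bookkeeping consolidation: it lies upstream, in the proof of Theorem~\ref{theorem main}, where the two horospherical subvarieties in $(F_4, \alpha_3)$ must be shown to be rigid. Granting Theorems~\ref{known result I}, \ref{theorem main}, and~\ref{Theorem 2}, the only non-trivial checks here are the two translation steps — identifying the horospherical datum $(C_m, \alpha_{i+1}, \alpha_i)$ with the odd symplectic Grassmannian, and confirming that every exception in Theorem~\ref{known result I} is linear — both of which are immediate from inspection of the given lists.
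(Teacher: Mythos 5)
Your proposal is correct and follows essentially the same route as the paper, which obtains Theorem~\ref{Theorem 1} by combining the classification in Theorem~\ref{Theorem 2} with Theorem~\ref{known result I} (whose exceptional cases are all linear, hence excluded by hypothesis), Theorem~1.2 of \cite{HoM} for the odd symplectic Grassmannians, and Theorem~\ref{theorem main} for the two horospherical cases in $(F_4,\alpha_3)$. The paper gives no further argument beyond this consolidation, so nothing is missing from your account.
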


For notations, see Section \ref{Section horospherical}. For example, $(C_m, \alpha_{i+1}, \alpha_i)$ denotes the odd symplectic Grassmannian consisting of isotropic $(i+1)$-subspaces of $(\mathbb C^{2m+1}, \omega)$ and $(C_n, \alpha_k)$ denotes the symplectic Grassmannian $Gr_{\omega}(k, \mathbb C^{2n})$ consisting of isotropic $k$-subspaces of $(\mathbb C^{2n}, \omega)$. \\

We remark that  Richmond-Slofstra \cite{RiSl} obtained the same classification of smooth Schubert varieties of rational  homogeneous manifold of Picard number one   by using a combinatorial method  developed by \cite{BiPo} (Grassmnnian Schubert varieties in    \cite{RiSl} are Schubert varieties in rational homogeneous manifolds of Picard number one in our paper). We  reprove  it by a geometric method (Proposition 3.7 of \cite{HoM} and Theorem \ref{Theorem 2}). One   advantage of our geometric method   is that it gives not just classification   but also their rigidity (Theorem \ref{Theorem 1}) at the same time. Moreover, we describe smooth Schubert varieties of rational homogeneous manifolds of Picard number one  geometrically: it is  either a homogeneous submanifold associated to a subdiagram of the Dynkin diagram of $S$, a linear space, or a horospherical variety. This is not true for rational homogeneous manifolds of higher Picard number. For example, odd symplectic flag manifolds (\cite{Mi}) are smooth Schubert varieties of symplectic flag manifolds but they are not horospherical. \\

The remainder of this paper is organized as follows. In Section 2 we give basic definitions and properties of Schubert varieties, horospherical varieties. We also explain our main tool, varieties of minimal rational tangents. 
We will restrict ourselves to the case when $S$ is of type $(F_4, \alpha_3)$ or of type $(F_4, \alpha_4)$. In Section 3   we classify smooth Schubert varieties of the rational homogeneous manifold of type $(F_4, \alpha_3)$ and prove their rigidity, and we complete the proof of Theorem \ref{theorem main}   in the last section by showing that any smooth Schubert varieties of the rational homogeneous manifold of type $(F_4, \alpha_4)$ is linear.

\section{Preliminaries}

\subsection{Schubert varieties}

Let $G$ be a connected semisimple algebraic group over $\mathbb C$.
 Take a Borel subgroup $B$ of $G$ and a maximal torus $T$  in $B$.   Denote by $\Delta^+$ the system of positive roots of $G$ and by $\Phi=\{ \alpha_1, \cdots, \alpha_{\ell}\}$ the system of simple roots of $G$. For a root $\alpha$, write $\alpha = \sum_{i=1}^{\ell} n_i(\alpha)\alpha_i$.
  Let $\frak t$ be the Lie algebra of $T$.
  To each simple root $\alpha_k$ we associate a parabolic subgroup $P$ of $G$, whose Lie algebra $\mathfrak p$ is given by $\mathfrak p = \mathfrak t + \sum_{n_k(\alpha) \geq 0} \frak g_{\alpha}$.
  The reductive part of $\frak p$ is given by $ \mathfrak t + \sum_{n_k(\alpha)=0}\frak g_{\alpha}$  and the nilpotent part   of $\frak p$ is given by $ \sum_{n_k(\alpha) >0}\frak g_{\alpha}$.
 The homogeneous manifold  $S=G/P$ is called the rational homogeneous manifold associated to $\alpha_k$. We will   denote it by $(G, \alpha_k)$.

 Let $\mathcal W$ be the Weyl group of $G$. For $w \in \mathcal W$, set $\Delta(w)=\{ \beta \in \Delta^+  :  w(\beta) \in -\Delta^+\}$.
Define a subset $\mathcal W^P$ of  $\mathcal W$  by
 $\mathcal W^P:=\{ w \in \mathcal W: \Delta(w) \subset \Delta(U_P)\},$   where   $\Delta(U_P)=\{
 \alpha \in \Delta^+: n_{\alpha_k}(\alpha) >0\}$.   Then we have a cell decomposition
  $S=\coprod _{w \in \mathcal W^P} B.x_w  $,
  where $x_w=wP, w \in \mathcal W^P$ are $T$-fixed points in $S$.
  For each $w \in \mathcal W^P$, the closure $S(w)$  of $B. x_w$ is called  {\it the Schubert variety of type} $w$.

 \subsection{Horospherical  varieties}

\label{Section horospherical}

Let $L$ be a connected reductive algebraic group.
Let $H$ be a closed  subgroup of $L$.
A homogeneous space $L/H$ is said to be {\it horospherical} if $H$ contains the unipotent radical of a Borel subgroup of $L$. In this case, the normalizer $N_L(H)$ of $H$ in $L$ is a parabolic subgroup $P$ of $L$ and $P/H$ is a torus $(\mathbb C^{\times})^r$. Thus there is a $(\mathbb C^{\times})^r$-bundle structure on $L/H$ over $L/P$.    A normal $L$-variety is called  {\it horospherical} if it contains an open dense $L$-orbit isomorphic to a horospherical homogeneous space  $L/H$.

For a dominant weight $\varpi$ of $L$ let $V_L(\varpi)$ denote the irreducible representation space of $L$  with highest weight $\varpi$. Fix a Borel subgroup of $L$.
Let $\{\alpha_1, \cdots, \alpha_n\}$ be the system of simple roots of $L$ and let $\{ \varpi_1, \cdots, \varpi_{n}\}$ be the system of fundamental weights  of $L$.   Take a highest weight vector $v_i$ in $V_L(\varpi_i)$ for $i=1, \cdots n$.   Then the $L$-orbit of $[v_i]$ in $\mathbb P(V_L(\varpi_i))$ is the rational homogeneous variety of type $(L, \alpha_i)$.

  For $i \not=j$, the  closure  of the $L$-orbit of $[v_i+v_j]$ in $\mathbb P(V_L(\varpi_i) \oplus V_L(\varpi_j))$  is a horospherial $L$-variety (Proposition 2.1 of \cite{HoH}).
We will denote the closure of $L.[v_i+v_j]$ in $\mathbb P(V_L(\varpi_i) \oplus V_L(\varpi_j))$  by
 $(L, \alpha_i, \alpha_j)$.
It has  three $G$-orbits: one open orbit $L.[v_i + v_j]$ and two closed orbits, $ L.[v_i]$ and $  L.[v_j]$.  For more details on horospherical varieties see \cite{Pa}.

\begin{proposition} [Proposition 1.8 and Proposition 1.9 and Proposition 1.10 of \cite{Pa}]  \label{homogeneous horospherical varieties} $\,$

\begin{enumerate}
\item The horospherical variety $(A_n, \alpha_1, \alpha_n)$ with $n \geq 2$ is isomorphic to the rational homogeneous manifold of type  $   (D_{n+1}, \alpha_1)$.
\item The horospherical variety $(A_n, \alpha_i, \alpha_{i+1})$ with $n \geq 3$ and $1 \leq i \leq n-1$ is isomorphic to the rational homogeneous manifold of type $ (A_{n+1}, \alpha_{i+1})$.
\item The horospherical variety $(D_n, \alpha_{n-1}, \alpha_{n})$ with $n \geq 4$ is isomorphic to the rational homogeneous manifold of type  $(D_{n+1}, \alpha_n)=(B_n, \alpha_n)$.
\end{enumerate}
\end{proposition}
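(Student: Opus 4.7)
The plan is to handle each case in a uniform way: exhibit an explicit $L$-equivariant identification of the target rational homogeneous manifold with the closure of $L.[v_i+v_j]$ by decomposing a natural representation of a larger group $L'\supset L$, and then verify density of the $L$-orbit by a dimension count. The dimension of the horospherical side is obtained from
\[
\dim(L,\alpha_i,\alpha_j)=\dim L/(P_i\cap P_j)+1,
\]
since the stabiliser in $L$ of $[v_i+v_j]\in\mathbb P(V_L(\varpi_i)\oplus V_L(\varpi_j))$ is the codimension-one subgroup of $P_i\cap P_j$ cut out by $\chi_i=\chi_j$, where $\chi_i,\chi_j$ are the characters through which $P_i\cap P_j$ acts on the lines $\mathbb Cv_i$ and $\mathbb Cv_j$.

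For case (1) I would fix a polarisation $\mathbb C^{2n+2}=U\oplus U^*$ of the standard representation of $SO_{2n+2}$, so that the stabiliser $SL(U)\simeq SL_{n+1}$ sees $U\oplus U^*\cong V_L(\varpi_1)\oplus V_L(\varpi_n)$ with the $SO_{2n+2}$-invariant quadratic form given by the natural pairing $Q(u,\phi)=\phi(u)$; taking $v_1=e_1\in U$ and $v_n=e_{n+1}^*\in U^*$ gives $Q(v_1+v_n)=0$, so $[v_1+v_n]$ lies on the hyperquadric $Q^{2n}=(D_{n+1},\alpha_1)$, and the dimensions $\dim(A_n,\alpha_1,\alpha_n)=(2n-1)+1=2n=\dim Q^{2n}$ force the two irreducible closures to coincide. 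For case (2), decomposing $\mathbb C^{n+2}=\mathbb C^{n+1}\oplus\mathbb Ce_{n+2}$ produces the $SL_{n+1}$-equivariant isomorphism
\[
\wedge^{i+1}\mathbb C^{n+2}=\wedge^{i+1}\mathbb C^{n+1}\oplus(\wedge^i\mathbb C^{n+1})\wedge e_{n+2}\cong V_L(\varpi_{i+1})\oplus V_L(\varpi_i),
\]
under which the Pl\"ucker embedding of $Gr(i+1,n+2)=(A_{n+1},\alpha_{i+1})$ sits inside $\mathbb P(V_L(\varpi_{i+1})\oplus V_L(\varpi_i))$ and $v_{i+1}+v_i$ represents the $(i+1)$-plane $\langle e_1,\ldots,e_i,e_{i+1}+e_{n+2}\rangle$; both sides have dimension $(i+1)(n+1-i)$. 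Case (3) is parallel: embed $Spin_{2n}\subset Spin_{2n+1}$ as the stabiliser of a non-isotropic vector in the $B_n$ vector representation, so that the half-spin decomposition $V_{B_n}(\varpi_n)|_{Spin_{2n}}=V_{D_n}(\varpi_{n-1})\oplus V_{D_n}(\varpi_n)$ shows that $v_{n-1}+v_n$ is a $B_n$-highest-weight vector and hence represents a point of the spinor variety $(B_n,\alpha_n)=(D_{n+1},\alpha_n)$, and both varieties have dimension $n(n+1)/2$.

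The principal obstacle is case (3): the decomposition of the $B_n$ spin representation under $Spin_{2n}$ and the identification of highest-weight vectors require some care with Clifford algebras and the explicit pure-spinor description of the spinor variety, whereas cases (1) and (2) reduce to the transparent polarisation and Pl\"ucker pictures. Once the highest-weight sums are located on the target variety, density of the $L$-orbit follows from the dimension formula above, and the isomorphism of irreducible closures is then automatic.
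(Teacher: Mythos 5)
The paper offers no proof of this proposition at all: it is quoted from Pasquier \cite{Pa} (Propositions 1.8--1.10), so there is no internal argument to compare against. Your strategy --- branch a representation of a larger group down to $L$, locate $[v_i+v_j]$ on the target homogeneous variety, and conclude from irreducibility together with the dimension count $\dim (L,\alpha_i,\alpha_j)=\dim L/(P_i\cap P_j)+1$ --- is the standard route and is essentially how these identifications are established in \cite{Pa} and \cite{HoH}. Cases (1) and (2) are complete as written: the stabilizer of $[v_i+v_j]$ is indeed the codimension-one kernel of $\chi_i\chi_j^{-1}$ in $P_i\cap P_j$ (the character is nontrivial since $\varpi_i\neq\varpi_j$), the pairing form on $U\oplus U^*$ puts $[e_1+e_{n+1}^*]$ on the quadric, the sum $e_1\wedge\cdots\wedge e_{i+1}+e_1\wedge\cdots\wedge e_i\wedge e_{n+2}=e_1\wedge\cdots\wedge e_i\wedge(e_{i+1}+e_{n+2})$ is decomposable and hence lies on $Gr(i+1,n+2)$, and both dimension counts check out ($2n$ and $(i+1)(n+1-i)$ respectively).

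The one genuine gap is the assertion in case (3) that ``$v_{n-1}+v_n$ is a $B_n$-highest-weight vector.'' Taken literally this is false: choosing a maximal torus $T\subset Spin_{2n}\subset Spin_{2n+1}$ and compatible Borels, the weights of $V_{B_n}(\varpi_n)$ are $(\pm\tfrac12,\dots,\pm\tfrac12)$ with multiplicity one, so its highest weight space is one-dimensional and coincides with the highest weight space of the summand $V_{D_n}(\varpi_n)$; the $B_n$-highest-weight vector is $v_n$ alone, while $v_{n-1}$ has weight $(\tfrac12,\dots,\tfrac12,-\tfrac12)$. What you actually need is that $v_{n-1}+v_n$ is an \emph{extreme} vector, i.e.\ a pure spinor. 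This is true --- in a hyperbolic basis with $v_n\leftrightarrow\langle e_1,\dots,e_n\rangle$ and $v_{n-1}\leftrightarrow\langle e_1,\dots,e_{n-1},f_n\rangle$, the Clifford annihilator of $v_{n-1}+v_n$ contains $e_1,\dots,e_{n-1}$ and one further isotropic vector in $\langle e_n,f_n,v_0\rangle$, hence is a maximal isotropic $n$-plane of $\mathbb C^{2n+1}$ --- but this is exactly the computation you defer to ``some care with Clifford algebras,'' and without it the argument for (3) is incomplete. (Alternatively, one can avoid spinors by working with $(B_n,\alpha_n)$ as the variety of maximal isotropic subspaces of $\mathbb C^{2n+1}$ and showing directly that $Spin_{2n}$ has a dense orbit there, namely the planes not contained in $v_0^{\perp}$; this again requires a short argument, not just the branching rule.)
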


\begin{proposition}[Theorem 0.1 and Theorem 1.7 of  \cite{Pa}]  \label{classification nonhomogeneous}

Let $L$ be a connected reductive algebraic group.  Let $X$ be a smooth projective horospherical  $L$-variety of Picard number one.  
Then $X$ is either homogeneous or one of the following.

\begin{enumerate}
\item [{\rm(1)}] $(B_n, \alpha_{n-1}, \alpha_n)$, $n \geq 3${\rm;}
\item [{\rm(2)}] $(B_3, \alpha_1, \alpha_3)${\rm;}
\item [{\rm(3)}] $(C_n, \alpha_{i+1}, \alpha_i)$, $n\geq 2$ and $i \in \{ 1,2, \cdots, n-1\}${\rm;}
\item [{\rm(4)}] $(F_4, \alpha_2, \alpha_3)${\rm;}
\item [{\rm(5)}] $(G_2, \alpha_2, \alpha_1)$.
\end{enumerate} 
\end{proposition}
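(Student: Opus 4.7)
The plan is to classify smooth horospherical $L$-varieties of Picard number one via the Luna--Vust theory of equivariant embeddings of spherical homogeneous spaces, following a Pasquier-style analysis. I would begin by writing $X \supset L/H$ as an equivariant embedding. Since $H$ contains the unipotent radical $U$ of a Borel $B \subset L$, the normalizer $P = N_L(H)$ is parabolic and $P/H \cong (\mathbb{C}^\times)^r$, so outside its boundary $X$ is a torus bundle over $L/P$. Each closed $L$-orbit in the boundary is of the form $L/Q$ for a parabolic $Q \supset B$, and the Picard number one hypothesis forces $r = 1$ together with exactly two closed orbits $L/Q_+$ and $L/Q_-$, with $P = Q_+ \cap Q_-$.

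Next I would pin down the combinatorics of smoothness. The colored fan of $X$ in the sense of Luna--Vust reduces to a single maximal colored cone, and near each closed orbit there is a local toric model of the form $\mathbb{A}^1 \times (\text{affine chart of } L/Q_\pm)$. Smoothness of $X$ is thus equivalent to smoothness of the two local models. This forces each $Q_\pm$ to be obtained from $P$ by adjoining exactly one simple root: writing $P = P_I$ for a subset $I$ of simple roots of $L$, one has $Q_\pm = P_{I \cup \{\alpha_{i_\pm}\}}$ with $\alpha_{i_+} \ne \alpha_{i_-}$. A further compatibility condition must hold between the character $\chi : P \to \mathbb{C}^\times$ encoding the $\mathbb{C}^\times$-bundle $P/H$ and the pair $(Q_+,Q_-)$, which becomes a condition on the extremal weights of the representations $V_L(\varpi_{i_+})$ and $V_L(\varpi_{i_-})$.

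Finally I would run through all simple types of $L$ and all admissible pairs $(\alpha_{i_+}, \alpha_{i_-})$. Smoothness at each closed orbit reduces to the requirement that $\chi$ be a (co)minuscule weight for the Levi factor of $Q_\pm$; combined with Proposition~\ref{homogeneous horospherical varieties}, which discards those cases that turn out to be homogeneous, the remaining pairs are exactly the five families (1)--(5) in the statement. The main obstacle will be the smoothness step: producing an explicit enough local model around each closed orbit to translate ``smooth variety'' into a manageable combinatorial condition on roots, and carrying this out uniformly across types so that the final case-by-case verification is short. Everything else (the one-orbit count, the Levi-minuscule criterion, the identification with the listed horospherical triples) is then essentially bookkeeping on the Dynkin diagram.
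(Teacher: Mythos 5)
This proposition is not proved in the paper at all: it is imported verbatim from Pasquier (Theorem 0.1 and Theorem 1.7 of \cite{Pa}) and used as a black box, so there is no internal argument to compare yours against. What you have written is a plan for reproving the cited reference rather than anything the authors do. As such a plan it does follow the broad lines of Pasquier's actual argument (Luna--Vust theory for the horospherical embedding $L/H\hookrightarrow X$, reduction to rank one with two closed orbits, local analysis of smoothness at the closed orbits), so the \emph{approach} is the right one.

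However, the two steps where the content actually lies are asserted rather than proved. First, the implication ``Picard number one forces $r=1$ and exactly two closed orbits $L/Q_\pm$ with $P=Q_+\cap Q_-$'' requires the explicit description of the divisor class group of a horospherical variety in terms of the rays and colors of its colored fan; your sketch supplies no such computation, and without it the reduction to triples $(L,\alpha_{i_+},\alpha_{i_-})$ is unjustified. Second, and more seriously, the claim that smoothness at each closed orbit is equivalent to a (co)minuscule condition on $\chi$ relative to the Levi of $Q_\pm$ is essentially Pasquier's Theorem 1.7 restated, not derived: you do not produce the local model near the closed orbits (for instance the toroidal resolution $\mathbb P\bigl(L_{-\varpi_{i_+}}\oplus L_{-\varpi_{i_-}}\bigr)\to X$ over $L/P$, whose two sections are contracted onto $L/Q_+$ and $L/Q_-$, where smoothness becomes a condition on the fibers $Q_\pm/P$ and their normal data), nor do you verify that the resulting combinatorial condition, run over all Dynkin types, yields exactly the five families (1)--(5). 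Finally, discarding ``the cases that turn out to be homogeneous'' via Proposition \ref{homogeneous horospherical varieties} presupposes that that proposition exhausts the homogeneous cases, which itself needs proof. Since this is a quoted classification theorem, the defensible options are to cite \cite{Pa} as the paper does, or to carry out the Picard-group and smoothness computations in full; the sketch as it stands does neither.
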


In Proposition 4.1 of \cite{HoH}, we describe an equivariant embedding of a smooth horospherical variety of Picard number one into a rational homogeneous manifold of Picard number one as a linear section. Among them $(C_m, \alpha_{i+1}, \alpha_{i})$ is a smooth Schubert variety of $(C_{m+1}, \alpha_{i+1})$. We have two more smooth Schubert varieties as follows.

\begin{proposition}  \label{linear sections} Let $S$ be a rational homogeneous manifold of type $(F_4, \alpha_3)$ and let $S_0$ be one of the following horospherical varieties:

\begin{enumerate}

\item    $S_0=(B_3, \alpha_{2}, \alpha_3)${\rm;}

 \item   $S_0=(C_2, \alpha_{2}, \alpha_1)${\rm;}

\end{enumerate}

 \noindent Then there is an embedding of   $S_0$ into $S$ as a   smooth Schubert variety.
\end{proposition}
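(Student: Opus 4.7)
The plan is to exhibit each $S_0$ as the closure of an orbit of a reductive subgroup $L$ of $F_4$ acting on $S=(F_4,\alpha_3)$, identify this closure with a Schubert variety via $B$-stability, and read off smoothness from the horospherical model itself.

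First, I would fix the standard root data of $F_4$ with simple roots $\alpha_1,\alpha_2,\alpha_3,\alpha_4$ (where $\alpha_3,\alpha_4$ are short), and a Borel $B\supset T$. For case (2), the pair $\{\alpha_3,\alpha_4\}$ already generates a $C_2$-subsystem inside the $F_4$ root system (joined by a double bond in the $F_4$ Dynkin diagram, with $\alpha_4$ long and $\alpha_3$ short when viewed inside this $C_2$), and I take $L\subset F_4$ to be the corresponding Sp$(4)$-subgroup generated by the associated root subgroups together with a compatible torus. For case (1), I would use a $B_3$-subsystem of $F_4$ containing $\alpha_2,\alpha_3$ and additional roots read off from the $F_4$ root tables, so that $L$ is of type $B_3$ with $\alpha_3$ the short end-node of the $B_3$ diagram and $\alpha_2$ a long simple root. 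Both $L$'s contain $T$ and $B\cap L$ is a Borel of $L$.

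Next, for each case I would select a $T$-fixed point $x\in S$ such that the orbit $L.x$ is open in its closure, the $L$-stabilizer of a general point of $L.x$ contains the unipotent radical of a Borel of $L$ (so that $L.x$ is horospherical), and the two $T$-fixed points of $\overline{L.x}$ realize the two closed $L$-orbits predicted by the horospherical model. Comparing the weights of $T$ acting on the tangent directions of $\overline{L.x}$ at these fixed points, and applying the equivariant embedding of Proposition 4.1 of \cite{HoH}, which realizes $(L,\alpha_i,\alpha_j)$ as a linear section inside an ambient rational homogeneous $L$-manifold, one identifies $\overline{L.x}$ with $(B_3,\alpha_2,\alpha_3)$ or $(C_2,\alpha_2,\alpha_1)$, respectively.

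Finally, I would verify that $\overline{L.x}$ is $B$-stable in $S$. Since $B\cap L$ is a Borel of $L$ already preserves $\overline{L.x}$, it suffices to check that each positive root subgroup $U_\beta$ of $F_4$ with $\beta\notin\Delta^+(L)$ either fixes $x$ and acts trivially on $\overline{L.x}$, or maps $\overline{L.x}$ into itself; this is a finite combinatorial check inside the $F_4$ root system. Once $B$-stability is established, $\overline{L.x}$ is a Schubert variety $S(w)$ for some $w\in\mathcal W^P$, and smoothness is automatic because $\overline{L.x}\cong S_0$ and the horospherical varieties in question are smooth by Proposition \ref{classification nonhomogeneous}. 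The main obstacle is case (1): pinning down the correct $B_3$-subsystem of $F_4$ and checking that the weight-space decomposition of $T_xS$ under this $B_3$-subgroup contains $V_{B_3}(\varpi_2)\oplus V_{B_3}(\varpi_3)$ as the subspace spanning $\overline{L.x}$, rather than some other pair of fundamental representations.
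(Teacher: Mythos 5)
Your overall endpoint---exhibit $S_0$ as an irreducible closed subvariety whose stabilizer in $G$ contains a Borel subgroup, then invoke the criterion that such a subvariety is a Schubert variety---is exactly the one the paper uses. But the route you propose breaks at the very first step, in the choice of base point. If $L$ is a reductive subgroup of $G=F_4$ whose maximal torus sits inside $T$, and $x=wP$ is a $T$-fixed point of $S=G/P$, then $\mathrm{Stab}_L(x)=L\cap wPw^{-1}$ is a \emph{parabolic} subgroup of $L$: the set $\Delta(L)\cap w\Delta(\mathfrak p)$ is a closed subset of the root system of $L$ whose union with its negative is all of $\Delta(L)$, hence contains a positive system. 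Consequently $L.x$ is a complete homogeneous variety, so it is already closed and $\overline{L.x}=L.x$ is a single $L$-orbit. It can never have the three-orbit structure (one open orbit and two closed orbits $L.[v_i]$, $L.[v_j]$) of the horospherical varieties $(B_3,\alpha_2,\alpha_3)$ and $(C_2,\alpha_2,\alpha_1)$. To get the open horospherical orbit you must start from a point that is \emph{not} $T$-fixed, e.g.\ a point of the form $[v_\mu+v_\nu]$ on a $T$-stable line joining two $T$-fixed points; this also destroys the later steps as you state them, since the ``two $T$-fixed points of $\overline{L.x}$'' do not arise in your setup, and once $x$ is not $T$-fixed even $T$-stability of $\overline{L.x}$, let alone stability under the root groups $U_\beta$ with $\beta\notin\Delta(L)$, is no longer a finite root-system check but a statement about the action on the whole closure. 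There is also a factual slip: in $F_4$ the double bond joins $\alpha_2$ and $\alpha_3$, and $\alpha_3,\alpha_4$ are both short, so $\{\alpha_3,\alpha_4\}$ generates a subsystem of type $A_2$, not $C_2$; the $C_2=B_2$ subsystem is $\{\alpha_2,\alpha_3\}$.

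For contrast, the paper sidesteps all of this. For $(C_2,\alpha_2,\alpha_1)$ it simply composes two known embeddings: this variety is the odd symplectic Grassmannian, already known to be a smooth Schubert variety of $(C_3,\alpha_2)$, and $(C_3,\alpha_2)$ sits inside $(F_4,\alpha_3)$ as the homogeneous submanifold of the subdiagram $\{\alpha_2,\alpha_3,\alpha_4\}$. For $(B_3,\alpha_2,\alpha_3)$ it uses the construction of \cite{HoH}: the cone $\widehat{\mathbb S}$ over $\mathbb S=(B_3,\alpha_3)\subset\mathcal C_x(\mathcal S)$ with $\mathcal S=(F_4,\alpha_4)$ embeds in $\mathcal S$ as a linear section stabilized by the isotropy group $\mathcal P$, the variety $X=(B_3,\alpha_2,\alpha_3)$ is the Fano variety of lines on $\widehat{\mathbb S}$ inside $S\subset F_1(\mathcal S)$, and hence $\mathcal P$ (which contains a Borel of $G$) stabilizes $X$; Proposition 2.1 of \cite{HoM} then makes $X$ a Schubert variety. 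If you want a direct orbit-closure argument in the spirit of your proposal, you would need to replace your $T$-fixed base point by a sum of two weight vectors and then still supply a genuine argument for $B$-stability of the closure; at that point you are essentially reconstructing the linear-section analysis of \cite{HoH} rather than shortcutting it.
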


\begin{proof}
 We recall how to embed $X=(B_3, \alpha_2, \alpha_3)$ into $S=(F_4, \alpha_3)$. For details see \cite{HoH}. The rational homogeneous manifold $\mathbb S=(B_3, \alpha_3)$ can be embedded into the variety $\mathcal C_x(\mathcal S)$ of minimal rational tangents  of $\mathcal S=(F_4, \alpha_4)$ at $x \in \mathcal S$, and the isotropy group $\mathcal P$ of $\mathcal G=Aut(\mathcal S)$ at $x$  acts transitively on $\mathbb S$. Thus the cone $\widehat{\mathbb S}$ over $\mathbb S$ with vertex $x$  can be embedded into $\mathcal S$ as a linear section and $\mathcal P$ stabilizes $\widehat{\mathbb S}$.  Furthermore, $X$ is the Fano variety  $F_1(\widehat{\mathbb S})$ of lines lying on the cone $\widehat{\mathbb S}$ over $\mathbb S$, and $S$ can be embedded into the Fano variety $F_1(\mathcal S)$ of lines lying on $\mathcal S$. The embedding of $X$ into $S  $ is induced by the embedding of $\widehat{\mathbb S}$ into $\mathcal S$. Therefore, $\mathcal P$ stabilizes $X$.

Since the stabilizer of $X$ in $G=\Aut(S)$ contains a Borel subgroup of $G$ and $X$ is irreducible, $X$ is a Schubert variety (Proposition 2.1 in \cite{HoM}). This completes the proof for the case (1).

For (2) just embed $S_0$ into a rational homogeneous manifold $S_1$ of type $(C_3, \alpha_2)$ and consider the embedding of $S_1$ into $S$ as a homogeneous submanifold associated to a subdiagram of the marked Dynkin diagram of $S$.
\end{proof}


 \subsection{Varieties of minimal rational tangents} \label{section vmrt of Schubert varieties}

Let $X$ be a uniruled projective manifold with an ample line bundle $\mathcal L$. By a (parameterized) {\it rational curve} on $X$ we mean a nonconstant holomorphic map $f: \mathbb P^1 \rightarrow X$.
 A rational curve $f$ is said to be {\it free} if    the pull-back   $f^*TX$ of the tangent bundle $TX$ of $X$  on $\mathbb P^1$ is semipositive. A free rational curve $f$ such that the degree $f^*\mathcal L$ is minimum among all free rational curves is called a {\it minimal rational curve}.
 Let $\mathcal H$ be a connected component of $\text{Hom}(\mathbb P^1, X)$ containing a minimal rational curve and let $\mathcal H^0$  be the subset consisting of free rational curves. The  quotient space $\mathcal K=\mathcal H^{0}/\Aut(\mathbb P^1)$ of (unparameterized) minimal rational curves  is  called 
    a {\it minimal rational component}.

Fix a minimal rational component $\mathcal K$. When we say a minimal rational curve we mean a  rational curve belonging to $\mathcal K$. For a general $x \in X$ the space $\mathcal K_x$ of minimal rational curves passing through $x$ is a projective manifold. Define a rational map $\Psi$ from $\mathcal K_x$ to $\mathbb P(T_xX)$ by sending  a minimal rational curve immersed at $x$ to the tangent line at $x$. The strict transformation $\mathcal C_x(X)$ of $\Psi$ is called the {\it variety of minimal rational tangents of} $X$ {\it at} $x$. The union of $\mathcal C_x(X)$ over general $x \in X$ forms a fiber bundle $\mathcal C(X)$ over $X$. The variety of minimal rational tangents was introduced in \cite{HwM99} to study geometric structures on uniruled projective manifolds. For more details on the variety  of minimal rational tangents and its applications to the study of geometric structures on uniruled projective manifolds, see   \cite{Mk16},  the most recent survey. \\

Let $S=G/P$ be a rational homogeneous manifold associated to a simple root. Then the Picard number of $S$ is one and the ample generator $\mathcal L$ of the Picard group defines a  $G$-equivariant embedding of $S$ into the projective space $\mathbb P(H^0(S, \mathcal L)^*)=\mathbb P^N$. Lines $\mathbb P^1$ in $\mathbb P^N$ lying on $S$ are minimal rational curves, and we will choose the family $\mathcal K$ of lines lying on $S$ as our minimal rational component, so that the variety $\mathcal C_x(S)$ of minimal rational tangents of $S$ at any $x$  in $S$ is defined by the space of all tangent directions of lines lying on $S$ passing through $x$. If $S$ is associated to a long root, then $G$ acts on $\mathcal K$  transitively.  If $S$ is associated to a short root, then $\mathcal K$ has two $G$-orbits. In any case, by a general line we mean a line corresponding to  a point in   the open $G$-orbit  in $\mathcal K$, and by a general point in $\mathcal C_x(S)$ we mean the tangent direction of a general line. 
Let $\mathcal C_x(S)^{gen}$ denote the subvariety of $\mathcal C_x(S)$ consisting of the tangent directions of general lines in $S$. For an explicit description of the variety $\mathcal C_x(S)$ of minimal rational tangents of $S$ and its application to the deformation rigidity of $S$,     see \cite{HwM02},  \cite{HwM04b}, and  \cite{HwM05}.

 Let $S_0$ be a Schubert variety of $S$. By Proposition 3.1 of \cite{HoM}, $S_0$ is covered  by lines of $S$ lying on $S_0$  and  is of Picard number one (the same arguments in the proof work for the case when $S_0$ is singular). Consider the family $\mathcal K_0$ of all lines lying on $S_0$.
 The stabilizer $Stab_G(S_0)$ of $S_0$ in $G$ is a parabolic subgroup of $G$. By a {\it general} point in $S_0$ we mean a point $x$ in the open orbit  of   $Stab_G(S_0)$ in $S_0$. In particular, the base point of $S_0$ is a general point.
For a general point $x$ of $S_0$, define the variety  $\mathcal C_x(S_0)$ of minimal rational tangents of $S_0$ at $x$ by the set of tangents directions of lines lying on $S_0$ passing through $x$. Then $\mathcal C_x(S_0)=\mathcal C_x(S) \cap \mathbb P(T_xS_0)$ (Proposition 3.1 of \cite{HoM}). By a {\it general} point of $\mathcal C_x(S_0)$ we mean a point in  $\mathcal C_x(S_0) \cap \mathcal C_x(S)^{gen}$. \\

Let $x_0$ be the base point of $S$ at which the isotropy group of $G$ is $P$.
Let $B$ be a Borel subgroup of $G$ contained in $P$ and $T$ be a maximal torus of $B$. Let $L$ be the reductive part of $P$ containing $T$.

\begin{proposition} [p.352 of \cite{HoM}, Proposition 4.1 of \cite{HoC}] \label{necessary conditions} Let $S=G/P$ be a rational homogeneous manifold associated to a simple root and let $S_0$ be a  Schubert variety.   Let $x=gx_0$ be a general point of $S_0$ and 
let $(L \cap B)_x$ denote the  conjugate $g(L \cap B)$ of the Borel subgroup $L \cap B$ of $L$.  Then
 \begin{enumerate}

   \item $\mathcal C_x(S_0)$  is invariant under the action of   $(L \cap B)_x$.
   \item If $S_0$ is smooth, then $\mathcal C_x(S_0)$ is smooth and is the closure of a  $(L \cap B)_x$-orbit   in $\mathcal C_x(S)$.
 \end{enumerate}

\end{proposition}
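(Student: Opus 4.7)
Proof plan.

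The plan is to exploit the Borel-invariance of the Schubert variety $S_0$ to produce a Borel subgroup of the Levi of $G_x$ that both fixes $x$ and stabilizes $S_0$, and, under the smoothness hypothesis, to identify $\mathcal C_x(S_0)$ as the closure of a single orbit of this Borel inside the ambient variety of minimal rational tangents $\mathcal C_x(S)$.

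First, write $S_0 = \overline{B\cdot y}$ with $y = w x_0$ for some $w \in \mathcal W^P$, and set $Q := \mathrm{Stab}_G(S_0)$, a parabolic subgroup of $G$ containing $B$. The open orbit in $S_0$ coincides with $B\cdot y$, so a general point $x \in S_0$ can be written $x = b_0 w x_0$ for some $b_0 \in B$. Set $g := b_0 w$, so that $G_x = g P g^{-1}$ has Levi $L_x := g L g^{-1}$, and define $(L\cap B)_x := g(L\cap B) g^{-1}$, a Borel subgroup of $L_x$.

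For (1), note that $(L\cap B)_x$ fixes $x$ because $L\cap B \subset P$ fixes $x_0$. To see that $(L\cap B)_x$ stabilizes $S_0$, compare root subgroups: $(L\cap B)_x$ is generated by $T$ and the root subgroups $U_{b_0 w \alpha}$ for $\alpha$ a positive root of the Levi $L$ (i.e.\ $n_k(\alpha) = 0$). Since $b_0 \in B \subset Q$ normalizes $Q$, it suffices to show $w\alpha$ is a root of $Q$ for each such $\alpha$; this follows by matching the root datum of $Q$ against the action of $w \in \mathcal W^P$ on $\Delta^+(L)$, using that $\Delta(w)\subset \Delta(U_P)$. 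With $(L\cap B)_x$ fixing $x$ and preserving $S_0$, it acts on $T_xS_0$, and therefore, via the identification $\mathcal C_x(S_0) = \mathcal C_x(S) \cap \mathbb P(T_xS_0)$ from Proposition 3.1 of \cite{HoM}, on $\mathcal C_x(S_0)$.

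For (2), assume $S_0$ is smooth. Since $S_0$ is of Picard number one and covered by the lines of $S$ lying on $S_0$ (Proposition 3.1 of \cite{HoM}), the VMRT construction applies at the smooth point $x$, and $\mathcal C_x(S_0)$ is a smooth projective variety. The ambient $\mathcal C_x(S)$ is a union of at most two $L_x$-orbits (one if $S$ is associated to a long root, two if to a short root), each admitting a Bruhat decomposition under the Borel $(L\cap B)_x$; so $(L\cap B)_x$ acts on $\mathcal C_x(S)$ with only finitely many orbits. The closed, irreducible, $(L\cap B)_x$-invariant subvariety $\mathcal C_x(S_0)$ must then coincide with the closure of a single $(L\cap B)_x$-orbit. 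The main obstacle I anticipate is the root-theoretic verification in (1) that $w(L\cap B) w^{-1} \subset Q$: one must precisely track which positive roots of $L$ are sent to negative roots by $w$ and confirm that those land in the Levi of $Q$. The smoothness of $\mathcal C_x(S_0)$ is the other technical point, relying on the standard VMRT smoothness criterion at a smooth point of a Picard-rank-one uniruled manifold.
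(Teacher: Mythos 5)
Your strategy coincides with the one the paper relies on (mostly by deferring to p.~352 of \cite{HoM} and Proposition 4.1 of \cite{HoC}): exhibit $(L\cap B)_x$ as a subgroup fixing $x$ and stabilizing $S_0$, using $\Delta(w)\subset\Delta(U_P)$ to see that $w(L\cap B)w^{-1}\subset B\cap wPw^{-1}$, and then in the smooth case combine closedness, irreducibility, and finiteness of Borel orbits. Two points need repair, though. First, you identify ``general point of $S_0$'' with ``point of $B\cdot x_w$'', whereas the paper defines it as a point of the open $\mathrm{Stab}_G(S_0)$-orbit, which can be strictly larger (already for $S_0=S$ the open orbit is all of $S$ while $B\cdot x_w$ is only the big cell). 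Extending from $B\cdot x_w$ to $\mathrm{Stab}_G(S_0)\cdot x_w$ is in fact the one step the paper writes out in full; your conjugation does handle it, but only once you allow $b_0\in\mathrm{Stab}_G(S_0)$ rather than $b_0\in B$ (then $b_0\,w(L\cap B)w^{-1}b_0^{-1}\subset b_0Bb_0^{-1}\subset\mathrm{Stab}_G(S_0)$ still fixes $x$ and preserves $S_0$). Second, in (2) the appeal to ``Bruhat decomposition'' to get finitely many $(L\cap B)_x$-orbits on $\mathcal C_x(S)$ is valid only for the closed $L_x$-orbits; when $S$ is associated to a short root the open $L_x$-orbit in $\mathcal C_x(S)$ is not complete and has no Bruhat decomposition, so what is actually needed is that $\mathcal C_x(S)$ is a spherical $L_x$-variety. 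That is true in the cases at hand and is what the paper verifies concretely (e.g.\ Lemma \ref{classification in Q4} for $(F_4,\alpha_3)$), but it does not follow for the reason you give. Finally, you use without comment that $\mathcal C_x(S_0)$ is irreducible and that its smoothness follows from that of $S_0$ at a general point; both are standard facts about the family of lines through a general point, but they are exactly the content being imported from the cited references and merit a sentence each.
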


\begin{proof}  For the base point $x_w=w.x_0$ of $S_0$, (1) and (2) follows from the arguments in p.352 of \cite{HoM} or Proposition 4.1 of \cite{HoC}.
It remains to show (1) for a general point $x$ of $S_0$, i.e., for any point in the orbit of  $Stab_G(S_0)$ of $x_w $.  By arguments in p.352 of \cite{HoM}, $\mathcal C_{x_w}(S_0)$ is invariant under the action of $(L\cap B)_{x_w} =w(L \cap B)$. Then $\mathcal C_{gx_w}( gS_0)$ is invariant under the action of $gw(L \cap B)$ for any $g \in G$. In particular, for $b \in Stab_G(S_0)$, $\mathcal C_{bx_w}( S_0)=\mathcal C_{bx_w}(b S_0)$ is invariant under the action of $bw(L \cap B)$. Therefore,     for a general point $x=bx_w$   of $S_0$, $\mathcal C_x(S_0)$  is invariant under the action of   $(L \cap B)_x=bw(L \cap B)$.
\end{proof}

We will consider   the following two conditions (I), (II) on the variety $ \mathcal C_x(S_0)$ of minimal rational tangent of the  `model' Schubert variety $S_0$:
 \begin{enumerate}
 \item [\rm(I)] at a general point $\alpha \in \mathcal C_x(S_0)$, for any $h \in P_x$ sufficiently
close to the identity element $e\in P_x$ and satisfying
$T_{\alpha}\left(h\mathcal C_x(S_0) \right) =
T_{\alpha}\left(\mathcal C_x(S_0) \right)$ we must have $h\mathcal C_x(S_0) = \mathcal C_x(S_0)$;
 \item [\rm(II)] any local deformation    of $ \mathcal C_x(S_0)$ in $\mathcal C_x( S )$ is induced by the action of $P_x$.
 \end{enumerate}

\begin{proposition} [Proposition 3.2 of \cite{HoM}]  \label{general case - inductive step}
Let $S=G/P$ be a rational homogeneous manifold associated to a   simple root, and $S_0  $
be a smooth Schubert variety of $S$. 
Assume that   $\mathcal C_x(S_0)$ satisfies  {\rm(I)} and {\rm(II)}
 at a general point $x \in S_0$.
Then,  the following holds true.
  \begin{enumerate}
 \item[\rm (1)]
    If  a  smooth subvariety  $Z$ of $S$ is uniruled by lines of $S$ lying on $Z$ and  contains $x$ as a general point with  $\mathcal C_x(Z) = \mathcal C_x( S_0)$, then $S_0$ is contained in $Z$.
  \item[\rm (2)]
Any local deformation of $S_0$ in $S$ is induced by the action of $G$.
 \end{enumerate}

\end{proposition}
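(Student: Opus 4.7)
The plan is to establish (1) by a propagation argument along chains of minimal rational curves, using hypotheses (I) and (II) to transfer the equality of VMRT from one point to a neighboring point along a line, and then to deduce (2) from (1) by straightening out the moving VMRT of a deformation of $S_0$ via group elements.

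For (1), I would start at the general point $x$ where $\mathcal{C}_x(Z)=\mathcal{C}_x(S_0)$. Any line $\ell$ of $S$ lying on $Z$ through $x$ has its tangent direction in $\mathcal{C}_x(Z)=\mathcal{C}_x(S_0)$, so $\ell$ is already a line of $S$ lying on $S_0$ and hence $\ell\subset S_0\cap Z$. The key step is to extend the VMRT equality to a nearby point $y=\ell(t)$. Both $\mathcal{C}_y(Z)$ and $\mathcal{C}_y(S_0)$ are smooth subvarieties of $\mathcal{C}_y(S)$ obtained from the initial data at $x$ by moving along $\ell$; hypothesis (II), applied at $y$ (which remains general in $S_0$ for $t$ small in the orbit of $\mathrm{Stab}_G(S_0)$), guarantees that each is the image of $\mathcal{C}_y(S_0)$ under some element of $P_y$ close to the identity. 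Comparing these two elements at a common general tangent direction $\alpha$ coming from the propagation of $\mathcal{C}_x(S_0)$ along $\ell$, one sees their difference $h\in P_y$ must satisfy $T_\alpha(h\mathcal{C}_y(S_0))=T_\alpha(\mathcal{C}_y(S_0))$; hypothesis (I) then forces $h\mathcal{C}_y(S_0)=\mathcal{C}_y(S_0)$, so that $\mathcal{C}_y(Z)=\mathcal{C}_y(S_0)$. Since $S_0$ is line-connected (being uniruled by lines of $S$ and of Picard number one by Proposition 3.1 of \cite{HoM}), repeatedly chaining this propagation along lines inside $S_0$ extends the VMRT equality to all general points, and so every line of $S_0$ lies on $Z$, yielding $S_0\subset Z$.

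For (2), let $\{S_t\}$ be a local deformation of $S_0$ in $S$. For $t$ small, $S_t$ is still smooth and uniruled by lines of $S$, and $\mathcal{C}_{x_t}(S_t)$ is a small deformation of $\mathcal{C}_x(S_0)$ inside the fiber $\mathcal{C}_{x_t}(S)$. By hypothesis (II), there exists $g_t\in G$ close to the identity such that $g_t^{-1} S_t$ has, at a common general point $x$, the same VMRT as $S_0$. Part (1) applied to $Z=g_t^{-1} S_t$ then gives $S_0\subset g_t^{-1} S_t$, and equality follows from the fact that both sides have the same dimension and are irreducible. Hence the deformation $S_t$ is induced by the action of $G$.

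The main obstacle is the step that extends the equality $\mathcal{C}_x(Z)=\mathcal{C}_x(S_0)$ from $x$ to a nearby point $y$ on a line $\ell$. One must verify with some care that (II) can be invoked with a group element close to the identity in a family-parameterized sense as $y$ varies along $\ell$, and that the tangent-space coincidence required to invoke (I) really holds at a general point of the propagated VMRT. This is essentially a Cartan--Fubini type analytic continuation in the spirit of \cite{HwM02,HwM04b,HwM05}, and its delicacy lies in controlling both the VMRT deformation and the induced isotropy action simultaneously along the line.
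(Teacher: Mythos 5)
This proposition is not proved in the present paper at all: it is imported verbatim as Proposition~3.2 of \cite{HoM}, so the comparison must be with the argument there. Your overall architecture --- propagate the equality $\mathcal C_y(Z)=\mathcal C_y(S_0)$ along chains of lines using (II) to reduce to a $P_y$-translate and (I) to kill that translate, then deduce (2) from (1) by normalizing the VMRT of a deformation with a group element --- is indeed the strategy of \cite{HoM}, and your part (2), including the ``same dimension and irreducible'' conclusion, is essentially complete modulo the standard fact that lines and VMRTs vary continuously in the family $\{S_t\}$.

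The genuine gap is in the one step you yourself flag as ``the main obstacle'': you assert, but do not prove, that $\mathcal C_{y}(Z)=h\,\mathcal C_{y}(S_0)$ and $\mathcal C_{y}(S_0)$ are \emph{tangent} at a common general point $\alpha$, which is exactly the hypothesis needed to invoke (I); without it the propagation collapses, since (II) alone only says the two varieties are congruent under $P_y$, not equal. The missing argument is the parallel-transport lemma along the line $\ell$: take $\alpha=[T_{y}\ell]$, which lies in both VMRTs because $\ell\subset Z\cap S_0$; identify $T_{[T_{y}\ell]}\mathcal C_{y}(Z)$ and $T_{[T_{y}\ell]}\mathcal C_{y}(S_0)$ with the fibres at $y$ of the positive parts $P_\ell^Z\subset TZ|_\ell$ and $P_\ell^{S_0}\subset TS_0|_\ell$ of the splittings $\mathcal O(2)\oplus\mathcal O(1)^p\oplus\mathcal O^q$; observe that a subbundle of $TS|_\ell$ of this type is determined by its fibre at the single point $x$ (the $\mathcal O(1)$-summands embed by constant matrices), so the equality $\mathcal C_x(Z)=\mathcal C_x(S_0)$ of varieties \emph{and} of their tangent spaces at $[T_x\ell]$ forces $P_\ell^Z=P_\ell^{S_0}$ along all of $\ell$, hence the required tangency at $y$. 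One must also choose $\ell$ to be a general line of $S_0$ through $x$ so that $[T_{y}\ell]$ is a \emph{general} point of $\mathcal C_{y}(S_0)$ in the sense required by (I) (i.e.\ lies in $\mathcal C_{y}(S_0)\cap\mathcal C_{y}(S)^{gen}$); your sketch does not address this, and for $S$ associated to a short root, where $\mathcal C_y(S)$ has non-generic strata, it is not automatic. With these two points supplied, your proof closes; without them it does not.
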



 \section{$(F_4, \alpha_3)$-case}

 Let $S=G/P$ be the rational homogeneous manifold  of type $(F_4, \alpha_3)$.
 Let $o \in S$ be the base point.  Then
  $\mathcal C_o(S)$ is the projectivization of the cone
 $$\{ e \otimes q + (f\wedge f') \otimes q^2 : e \wedge f \wedge f'=0, e, f, f' \in E,  q \in Q \} $$
 in $(E \otimes Q)  \oplus (\wedge^2 E \otimes S^2 Q)$, where $E$ is a complex vector space of dimension 3 and $Q$ is a complex vector space of dimension 2 (see \cite{HwM04b}).
Via the map $[e \otimes q + (f\wedge f') \otimes q^2]  \in \mathcal C_0(S) \mapsto   [q] \in \mathbb P(Q)$,  $\mathcal C_o(S)$ can be think of as a fiber bundle over $\mathbb P(Q)=\mathbb P^1$ with fiber isomorphic to the smooth quadric $\mathbb Q^4 \subset \mathbb P(E \oplus \wedge^2E)$.
   Let $\rho: P \rightarrow GL(T_{o}S)$ be the isotropy representation.
 Then   $\rho(P)$ is $(SL(E) \times SL(Q))\ltimes (E^* \otimes Q^*)$, where $E^* \otimes Q^*$ acts on $E \otimes Q$ trivially and maps $\wedge^2 E \otimes S^2 Q$ to $E \otimes Q$

   If $S_0$ is  the homogeneous submanifold associated to the subdiagram of of type $(C_3, \alpha_2)$ of $S$, then $\mathcal C_x(S_0)$ is the linear section of $\mathcal C_x(S)$ by $\mathbb P((F_2 \otimes Q) \oplus (F_2^{\perp} \otimes S^2Q))$, where  $F_2$ is a subspace of $E$ of dimension $2$, and  is isomorphic to $\mathbb P(\mathcal O(-1)^2 \oplus \mathcal O(-2))$.

   If $S_0$ is the horospherical variety $(B_3, \alpha_2, \alpha_3)$ in $S$, then $\mathcal C_x(S_0)$ is the linear section of $\mathcal C_x(S)$ by $\mathbb P((F_1 \otimes Q) \oplus (F_1^{\perp} \otimes S^2Q))$, where  $F_1$ is a subspace of $E$ of dimension $1$, and is isomorphic to $\mathbb P(\mathcal O(-1) \oplus \mathcal O(-2)^2)$.

   If $S_0$ is the horospherical variety $(C_2, \alpha_{2}, \alpha_1)$ in $S$, then $\mathcal C_x(S_0)$ is the linear section of $\mathcal C_x(S)$ by $\mathbb P((e \otimes Q) \oplus (f^* \otimes S^2Q) )$, where $e \in E$ and $f^* \in E^*$ be such that $\langle e, f^* \rangle =0$, and    is isomorphic to $\mathbb P(\mathcal O(-1) \oplus \mathcal O(-2))$.

\begin{lemma} \label{classification in Q4}
Let $B^1$ be a Borel subgroup of $SL(E)$.
 The smooth closures of $B^1$-orbits in $\mathbb Q^4 \subset \mathbb P(E \oplus \wedge^2E) \simeq \mathbb P(E \oplus E^*)$ intersecting the open $SL(E)$-orbit are given by
$$\mathbb Q^4, \mathbb P(F_1 \oplus F_1^{\perp}),\, \mathbb P(F_2 \oplus F_2^{\perp}),\, \mathbb P(V_1 \oplus W_1)$$
where $F_i$ ($i=1,2$) is a subspace of $E$ of dimension $i$ and $F_i^{\perp}$ is the annihilator of $F_i$, and $V_1$ is a subspace of $E$ of dimension one and $W_1$ is a subspace of $V_1^{\perp}$ of dimension one.

\end{lemma}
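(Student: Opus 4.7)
The plan is to enumerate all $B^1$-orbits meeting the open $SL(E)$-orbit $\mathcal U := \{[v+\phi] \in \mathbb Q^4 : v,\phi \neq 0\}$, then inspect their closures and pick out the smooth ones.

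The first step is to collect the natural candidates. Since $B^1$ preserves the flag $F_1 \subset F_2 \subset E$ and its dual $F_2^\perp \subset F_1^\perp \subset E^*$, and since $E$ and $E^*$ are non-isomorphic irreducible $SL(E)$-modules, every $B^1$-invariant subspace of $E \oplus E^*$ decomposes as $V \oplus W$ with $V \subset E$ and $W \subset E^*$ individually $B^1$-invariant, so $V \in \{0, F_1, F_2, E\}$ and $W \in \{0, F_2^\perp, F_1^\perp, E^*\}$. The subspace $\mathbb P(V \oplus W)$ lies in $\mathbb Q^4$ iff $W \subset V^\perp$ and meets $\mathcal U$ iff $V, W \neq 0$; only the three proper candidates $\mathbb P(F_1 \oplus F_2^\perp) = \mathbb P(V_1 \oplus W_1)$, $\mathbb P(F_1 \oplus F_1^\perp)$, $\mathbb P(F_2 \oplus F_2^\perp)$ survive. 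Together with $\mathbb Q^4$ itself, these are the four smooth varieties in the statement.

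To see these exhaust all smooth $B^1$-orbit closures, I would assign to each point of $\mathcal U$ a type $(\alpha,\beta) \in \{1,2,3\}^2$ with $v \in F_\alpha \setminus F_{\alpha-1}$ and $\phi \in F_{3-\beta}^\perp \setminus F_{4-\beta}^\perp$. The pairing $\phi(v)=0$ forbids the types $(1,3), (2,2), (3,1)$, leaving six feasible types. A routine torus-plus-unipotent normalization shows each is a single $B^1$-orbit, represented by $[e_1+e^3], [e_1+e^2], [e_2+e^3], [e_2+e^1], [e_3+e^2], [e_3+e^1]$ of dimensions $1, 2, 2, 3, 3, 4$ respectively. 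The orbits of types $(1,1), (1,2), (2,1)$ sit inside the three proper linear candidates above, so by dimension their closures are exactly those candidates; the type $(3,3)$ orbit is dense in $\mathcal U$ with closure $\mathbb Q^4$.

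The remaining two orbits, of types $(2,3)$ and $(3,2)$, should give singular closures, and this is the main step. For $[e_2+e^1]$, a direct computation with a general Borel element shows the orbit is dense in $\{a_3 = 0,\ a_1 b_1 + a_2 b_2 = 0\} \subset \mathbb P(F_2 \oplus E^*) \cong \mathbb P^4$, a rank-$4$ quadric, that is, a cone over a smooth $\mathbb Q^2$ with vertex $[e^3]$; one must also check that $[e^3]$ is a genuine limit point of the orbit, which I expect follows by sending the $(1,3)$-unipotent parameter to infinity. Hence this closure is singular at $[e^3]$, and by the symmetric argument the type $(3,2)$ closure is a singular cone with vertex $[e_1]$. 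This completes the classification. The main technical obstacle is exactly this closure computation for the last two types — both pinning the closure down as the full quadric cone and verifying that the cone vertex is attained as a limit.
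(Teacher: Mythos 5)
Your proposal is correct and follows essentially the same route as the paper: enumerate the six $B^1$-orbits meeting the open $SL(E)$-orbit (the paper lists the same six, in items (a)--(d) of its proof, with representatives labelled by the opposite flag convention), observe that three of the small ones fill out $B^1$-invariant linear subspaces and one is dense, and discard the two $3$-dimensional orbits as having singular closures. You actually supply more detail than the paper, which merely asserts that the two $3$-dimensional closures are ``singular''; your identification of them as rank-$4$ quadric cones $\{a_1b_1+a_2b_2=0\}\subset\mathbb P^4$ with vertex at $[e_3^*]$ (resp.\ $[e_1]$) is correct, and note that once the orbit is seen to be dense in this irreducible $3$-fold the vertex is automatically in the closure, so your ``extra check'' is not a separate obstacle. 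One small repair: the splitting of every $B^1$-invariant subspace of $E\oplus E^*$ as $V\oplus W$ does not follow from $E$ and $E^*$ being non-isomorphic irreducible $SL(E)$-modules (a $B^1$-submodule need not be $SL(E)$-stable, and $B^1$-representations are not semisimple); the correct reason is that the six torus weights of $E\oplus E^*$ are pairwise distinct, so every $T$-invariant, hence every $B^1$-invariant, subspace is a sum of coordinate lines.
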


 \begin{proof}
 Take a basis $\{e_1, e_2, e_3 \}$ of $E$ compatible with $B^1$. 
 Let $\tilde E$ be a vector space of dimension 4 containing $E$. Extend $\{e_1, e_2, e_3\}$ to a basis $\{e_1, e_2, e_3, e_4\}$ of $\tilde E$.
 Recall that the isomorphism $E \oplus \wedge^2 E \rightarrow \wedge^2 \tilde E$ is given by $e + f \wedge f' \mapsto e\wedge  e_4 + f \wedge f'$ and, under this isomorphism,  the closure of $SL(E).[e_1 + e_1 \wedge e_2]$ in $\mathbb P(E \oplus (\wedge ^2E))$ is isomorphic to $G(2,4) \simeq  \mathbb Q^4 \subset \mathbb P(\wedge^2 \tilde E) \simeq \mathbb P^5$ (Proposition \ref{homogeneous horospherical varieties}). Identifying  $\wedge^2 E$ with $E^*$ and considering quadratic form on $E \oplus E^*$, we can see that the closure of $SL(E).[e_1 +e_3^*]$ in $\mathbb P(E \oplus E^*)$  is $\mathbb Q^4 \subset \mathbb P^5$.

  Now $\mathbb Q^4$ has three $SL(E)$-orbits, $\mathbb P(E)$, $\mathbb P(  E^*)$ and the open orbit $\mathcal O$.
  The closures of $B^1$-orbits in $\mathbb Q^4$ which intersect the open orbit $\mathcal O$  are
 \begin{enumerate}
 \item[(a)] $ \cl(B_1.(e_1+e_3^*))=\mathbb Q^4$
 \item[(b)] $\cl(B_1.(e_1+ e_2^*))$,  $\cl(B_1.(e_2+e_3^*))$  (3-dimensional and singular)
 \item[(c)] $\cl(B_1.(e_2 +e_1^*))=\mathbb P^2$, $\cl(B_1.(e_3+e_2^*))=\mathbb P^2$
 \item[(d)] $\cl(B_1.(e_3+e_1^*))=\mathbb P^1$
 \end{enumerate}
We may express $\mathbb P^2$'s in (c) as $\mathbb P(F_2 \oplus F_2^{\perp})$ and $\mathbb P(F_1 \oplus F_1^{\perp})$, where $F_i $  is a subspace of $E$ of dimension $i$ and $F_i^{\perp}$ is the annihilator of $F_i$ for $i=1,2$.
 \end{proof}

The space $\mathcal K$ of $\mathbb P^2$'s in $\mathbb Q^4 \subset \mathbb P(E \oplus \wedge^2E)\simeq \mathbb P(E \oplus E^*)$ has two connected components, $\mathcal K_1$ and $\mathcal K_2$,  each of which is isomorphic to  $\mathbb P^3$. One of them contains $\mathbb P(E)$, and the other contains $\mathbb P(E^*)$.

\begin{lemma} \label{deformtion of P2 in Q4}
Each connected component of the space of $\mathbb P^2$'s in $\mathbb Q^4 \subset \mathbb P(E \oplus E^*)$ has two $SL(E) \ltimes E^*$-orbits: one is closed and the other is open.
\end{lemma}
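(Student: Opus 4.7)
The plan is to identify $\mathbb Q^4$ with the Grassmannian $G(2,\tilde E)$ via the isomorphism $\iota$ recalled in the proof of Lemma \ref{classification in Q4}, thereby identifying the two components $\mathcal K_1,\mathcal K_2$ with $\mathbb P(\tilde E)$ and $\mathbb P(\tilde E^*)$, and then to lift $SL(E)\ltimes E^*$ to a subgroup of $SL(\tilde E)$ acting on the lines (resp.\ hyperplanes) of $\tilde E$. Once the lift is in place, the orbit count is a direct computation.

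First, with $\tilde E=E\oplus\langle e_4\rangle$ and $\iota(e+f\wedge f')=e\wedge e_4+f\wedge f'$, the two families of $\mathbb P^2$'s in $G(2,\tilde E)\cong\mathbb Q^4$ are the $\alpha$-planes $\Sigma_v=\{W\in G(2,\tilde E):v\in W\}$ for $v\in\mathbb P(\tilde E)$, and the $\beta$-planes $\Pi_V=\{W\subset V\}$ for hyperplanes $V\subset\tilde E$, parametrised by $\mathbb P(\tilde E^*)$. A direct check from $\iota$ gives $\mathbb P(E)=\Sigma_{e_4}$ and $\mathbb P(\wedge^2 E)=\Pi_E$, so $\mathcal K_1\cong\mathbb P(\tilde E)$ with $\mathbb P(E)\leftrightarrow[e_4]$, and $\mathcal K_2\cong\mathbb P(\tilde E^*)$ with $\mathbb P(\wedge^2 E)\leftrightarrow[e_4^*]$ (the functional cutting out $E\subset\tilde E$).

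Next I would lift the $SL(E)\ltimes E^*$-action from $E\oplus\wedge^2 E$ to $\tilde E$ as follows: $SL(E)$ acts on $E\subset\tilde E$ standardly and fixes $e_4$, while $\phi\in E^*$ acts by $e\mapsto e+\phi(e)e_4$ on $e\in E$ and $e_4\mapsto e_4$. Using the identity
\[ (f+\phi(f)e_4)\wedge(f'+\phi(f')e_4)=f\wedge f'+(\phi(f)f'-\phi(f')f)\wedge e_4, \]
one verifies that the induced action on $\wedge^2\tilde E$, transported via $\iota$, matches the action on $E\oplus\wedge^2 E$ described at the start of Section 3 (trivial on $E$, and contracting $\wedge^2 E$ into $E$ via $\phi$). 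Thus $SL(E)\ltimes E^*$ embeds into $SL(\tilde E)$ preserving the line $\langle e_4\rangle\subset\tilde E$.

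Finally, an explicit orbit count finishes the job. In coordinates $[x_1{:}x_2{:}x_3{:}x_4]$ on $\mathbb P(\tilde E)=\mathcal K_1$ adapted to $E\oplus\langle e_4\rangle$, the action reads $(g,\phi)\cdot[x{:}x_4]=[gx{:}x_4+\phi(x)]$; this fixes $[0{:}0{:}0{:}1]$ and is transitive on its complement, yielding exactly two orbits, the closed singleton $\{\mathbb P(E)\}$ and an open $\mathbb A^3$. Dually on $\mathcal K_2=\mathbb P(\tilde E^*)$, the unipotent $E^*$ fixes pointwise the hyperplane $\{y^4=0\}\cong\mathbb P^2$ (the $\beta$-planes at hyperplanes of $\tilde E$ containing $e_4$), on which $SL(E)$ is transitive, giving a closed orbit $\cong\mathbb P^2$, while the full group acts transitively on the complement $\{y^4\ne 0\}\cong\mathbb A^3$, giving an open orbit. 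The main technical point is nailing down the correct explicit form of the unipotent lift in the previous paragraph; once that formula is verified, the rest is elementary linear algebra.
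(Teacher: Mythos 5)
Your proposal is correct, but it takes a genuinely different route from the paper. The paper never leaves $\mathbb P(E\oplus\wedge^2E)$: it classifies the $3$-dimensional subspaces $F$ with $\mathbb P(F)\subset\mathbb Q^4$ by hand, splitting into cases according to how $\mathbb P(F)$ meets $\mathbb P(E)$ and $\mathbb P(E^*)$, and showing by an explicit coefficient computation that the ``graph'' planes must be $F_{\lambda\varphi_1}$ for a single normal form $\varphi_1(e)=e_1\wedge e$; it then exhibits explicit elements of $E^*$ moving $\mathbb P(F_1\oplus F_1^{\perp})$ to $\mathbb P(F_{\varphi})$ (resp.\ $\mathbb P(E^*)$ to the graph planes). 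You instead transport everything to $G(2,\tilde E)$, identify the two components with the classical families of $\alpha$- and $\beta$-planes parametrized by $\mathbb P(\tilde E)$ and $\mathbb P(\tilde E^*)$, lift $SL(E)\ltimes E^*$ to the subgroup of $SL(\tilde E)$ stabilizing the line $\langle e_4\rangle$, and reduce the lemma to counting orbits of that subgroup on $\mathbb P^3$ and its dual --- which is cleaner and makes the open/closed dichotomy transparent. What your route does not produce is the explicit normal form $F_{\lambda\varphi_1}=\langle e_1,\ e_2+\lambda e_1\wedge e_2,\ e_3-\lambda e_3\wedge e_1\rangle$ and the limits $\lim_{\lambda\to 0}F_{\lambda\varphi_1}=E$, $\lim_{\lambda\to\infty}F_{\lambda\varphi_1}=F_1\oplus F_1^{\perp}$, which the paper reuses verbatim in the Remark after the lemma and in the proof of Proposition \ref{F4alpha3}; if you adopt your proof you would need to extract those normal forms separately. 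Two small slips, neither affecting the argument: the displayed identity should read $(f+\phi(f)e_4)\wedge(f'+\phi(f')e_4)=f\wedge f'+(\phi(f')f-\phi(f)f')\wedge e_4$, so your lift realizes the interior product with $-\phi$ rather than $\phi$ (harmless, since $E^*$ is a group); and the open orbit in $\mathcal K_1$ is $\mathbb P(\tilde E)\setminus\{[e_4]\}$, i.e.\ $\mathbb P^3$ minus a point, not $\mathbb A^3$ (only the open orbit in $\mathcal K_2$, the complement of a hyperplane, is $\mathbb A^3$).
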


\begin{proof}

Let $\mathbb P(F)$ be a $\mathbb P^2$ contained in $\mathbb Q^4$ in the same connected component as $\mathbb P(E)$. If $\mathbb P(F) \not= \mathbb P(E)$, then we have $\dim (\mathbb P(F) \cap \mathbb P(E))=0$, and ($\mathbb P(F) \cap  \mathbb P(E^*)$ has dimension 1 or is empty). In the first case, we have
$$F=F_1 \oplus F_1^{\perp}$$ for some subspace $F_1 \subset E$ of dimension 1.
In the second case, there is a linear map $\varphi: E \rightarrow \wedge^2 E$ such that $\dim \text{Ker}\, \varphi =1$ and $F=F_{\varphi}$, where $F_{\varphi} \subset E$ is the graph of $\varphi$. Let $\varphi_1:E \rightarrow  \wedge^2 E$ be a linear map defined  by $\varphi_1(e) = e_1 \wedge e$, where $e_1$ is a basis of $\text{Ker}\, \varphi $.  The condition $e \wedge \varphi(e) =0$ for any $e \in E$ implies that $\varphi $ is $\lambda \varphi_1$ for some $\lambda \in \mathbb C^{\times} =\mathbb C -\{0\}$. To see this, extend $\{e_1\}$ to a basis $\{e_1, e_2, e_3\}$ of $E$ and  write
\begin{eqnarray*}
\varphi(e_2)&=& \varphi^2_{12}e_1 \wedge e_2 + \varphi^2_{23}e_2 \wedge e_3 + \varphi^2_{31} e_3 \wedge e_1 \\
\varphi(e_3)&=& \varphi^3_{12}e_1 \wedge e_2 + \varphi^3_{23}e_2 \wedge e_3 + \varphi^3_{31} e_3 \wedge e_1.
\end{eqnarray*}
 From $0=e_2 \wedge \varphi(e_2)=e_3 \wedge \varphi(e_3)$ it follows that $\varphi^2_{31}=\varphi^3_{12}=0$. From $0=(e_1+ e_2) \wedge(\varphi(e_1) + \varphi(e_2))=e_1 \wedge \varphi(e_2) + e_2 \wedge \varphi(e_2) = e_1 \wedge \varphi(e_2)$
 it follows that $\varphi^2_{23}=0$. Similarly, $\varphi^3_{23}=0$.
 From $0=(e_2 + e_3) \wedge (\varphi(e_2) + \varphi(e_3))=e_2 \wedge \varphi(e_3) + e_3 \wedge \varphi(e_2)$, it follows that $e_2 \wedge (\varphi^3_{31}e_3 \wedge e_1) + e_3 \wedge (\varphi^2_{12} e_1 \wedge e_2)=0$ and thus $\varphi^3_{31}=-\varphi^2_{12}$. Put $\lambda:=\varphi^2_{12}$. Then $\varphi = \lambda \varphi_1$ and
   $F_{\varphi}$ is spanned by $$e_1,\,\, e_2 + \lambda e_1 \wedge e_2,\,\, e_3 -\lambda e_3 \wedge e_1.$$
   We remark that $\lim_{\lambda \rightarrow 0}F_{\lambda \varphi_1} =E$ and $\lim_{\lambda \rightarrow \infty} F_{\lambda \varphi_1} = F_1 \oplus F_1^{\perp}$, where $F_1 =\text{Ker}\, \varphi_1$.

 The action of $E^*$ on  $E \oplus \wedge^2 E$ is given by zero on $E$ and  by the interior product on $\wedge^2E$. For example, $ce_1^*.(e_1 \wedge e_2) = ce_2 + e_1 \wedge e_2$ and $ce_1^*.(e_3 \wedge e_1) = - ce_3 + e_3 \wedge e_1$, where $c \in \mathbb C$. Hence, there is an element $e \in E^*$ which maps $\mathbb P(F_1 \oplus F_1^{\perp})$ to $\mathbb P(F_{\varphi})$, while $\mathbb P(E)$ is fixed by the action of $SL(E) \ltimes E^*$.  Therefore, the connected component of the  space of $\mathbb P^2$'s in $\mathbb Q^4$ containing $\mathbb P(E)$ has two $SL(E) \ltimes E^*$-orbits, the orbit of $\mathbb P(E)$ (which is a one point set $\{\mathbb P(E) \}$) and the orbit of $\mathbb P(F_1 \oplus F_1^{\perp})$, where $F_1$ is a subspace of $E$ of dimension one. The first one is closed and the second one is open.  \\


Let $\mathbb P(F)$ be a $\mathbb P^2$ contained in $\mathbb Q^4$ in the same connected component as $\mathbb P(E^*)$. If $\mathbb P(F) \not=\mathbb P(E^*)$, by the same arguments as in the previous case, $F$ is either $F_2 \oplus F_2^{\perp}$ for some subspace $F_2$ of $E$ of dimension $2$ or is  spanned by
$$e_2 \wedge e_3 + \lambda e_2,\,\, e_3 \wedge e_1 -\lambda e_1,\,\, e_1 \wedge e_2$$
for some basis $\{e_1, e_2, e_3\}$ of $E$.
 Subspaces $F$ of the first form are fixed by the action of  $E^*$.
For each subspace $F$ of the second form, there is an element $e^* \in E^*$ which maps $E^* $ to $\langle e_2 \wedge e_3 + \lambda e_2, e_3 \wedge e_1 -\lambda e_1, e_1 \wedge e_2 \rangle$  (just think of $E^*$ as $\langle e_2\wedge e_3, e_3\wedge e_1, e_1 \wedge e_2 \rangle$). Therefore, the connected component of the  space of $\mathbb P^2$'s in $\mathbb Q^4$ containing $\mathbb P(E^*)$ has two $SL(E) \ltimes E^*$-orbits, the orbit of $\mathbb P(E^*)$ and the orbit of $\mathbb P(F_2 \oplus F_2^{\perp})$, where $F_2$ is a subspace of $E$ of dimension two. The first one is open and the second one is closed.
\end{proof}

\noindent {\bf Remark.}  Let $\mathcal Y_0 =\mathbb P(V_1 \oplus W_1)$ where $V_1 \subset E$ is a subspace of dimension one and $W_1 \subset V_1^{\perp}$ is a subspace of dimension one. Since any line $\mathbb P^1$ in $\mathbb Q^4$ is the intersection of two $\mathbb P^2$'s, contained in  different connected components of $\mathcal K$, any local deformation $\mathcal Y_t$ of $\mathcal Y_0$ is the intersection $\mathcal X_{1,t} \cap \mathcal X_{2,t}$, where $\mathcal X_{i,t}$ belongs to  in $\mathcal K_i$ for $i=1,2$.  In the proof of Proposition \ref{deformtion of P2 in Q4}, we prove that,  up to the action of $SL(E) \ltimes E^*$, $\mathcal X_{1,t}=\mathbb P(V_1 \oplus V_1^{\perp})$. Since $\mathcal Y_t=\mathcal X_{1,t} \cap \mathcal X_{2,t}$ is $\mathbb P^1$, $\mathcal X_{2,t}$ is of the form $\mathbb P(F_{2,t} \oplus F_{2,t}^{\perp})$, where $F_{2,t}$ is a subspace of $E$ of dimension two, and $\mathcal Y_t$ is of the form $\mathbb P(V_1 \oplus W_{1,t})$, where $W_{1,t} $ is a subspace of $ V_1^{\perp}$ of dimension one. Therefore, up to the action of $SL(E) \ltimes E^*$ again, $\mathcal Y_t =\mathcal Y_0$. \\


 \begin{proposition} \label{classification of vmrt}
 Let $S=G/P$ be the rational homogeneous manifold  of type $(F_4, \alpha_3)$.
 Varieties of minimal rational tangents of smooth Schubert varieties   of $S$ are   of the following forms: \\

 \begin{tabular} {|c|c|}
 \hline
 $\mathcal C_o(S) \cap \mathbb P((E\otimes q) \oplus (E^* \otimes q^2))$ \,\,&\,\, $\mathcal C_o(S)$ \\ [3pt]
 \hline
   $\mathbb P(E \otimes q)$  \,\,&\,\, $\mathcal C_o(S) \cap \mathbb P((F_1 \otimes Q) \oplus (F_1^{\perp}\otimes S^2Q))$    \\[3pt]
   $\mathbb P((F_2 \otimes q) \oplus (F_2^{\perp}\otimes q^2))$ & $\mathcal C_o(S) \cap \mathbb P((F_2 \otimes Q)\oplus (F_2^{\perp}\otimes S^2Q))$ \\[3pt]
 \hline
  $\mathbb P(F_2 \otimes q)$ \,\,&\,\, $\mathcal C_o(S) \cap \mathbb P((V_1 \otimes Q) \oplus (W_1\otimes S^2Q))$ \\[3pt]
 \hline
   $\mathbb P(F_1 \otimes q)$ \,\,& \,\,$\mathbb P(e \otimes Q)$ \\[3pt]
  \hline
 \end{tabular}
 %

 \vskip 10 pt \noindent
 where $F_i$ is a subspace of $E$ of dimension $i$ for $i=1,2$ and $V_1$ is a subspace of $E$ of dimension one and $W_1$ is a subspace of $V_1^{\perp}$ of dimension one and $e \in E$ and $q \in Q$.

 The corresponding smooth Schubert varieties are \\

 \begin{tabular}{|c|c|}
 \hline
   $(B_3, \alpha_3)$ \,\,&\,\, $S$ \\[3pt]
 \hline
    $\mathbb P^3_{A_3}$  \,\,&\,\, $(C_3,\alpha_2)$   \\[3pt]
      $ (B_2, \alpha_2)$  \,\,&\,\, $(B_3, \alpha_2, \alpha_3)$  \\[3pt]
 \hline
   $\mathbb P^2_{A_2}$ \,\,&\,\, $(C_2, \alpha_2, \alpha_1)$\\[3pt]
 \hline
  $ (A_1, \alpha_1)$ \,\,&\,\, $ (A_2, \alpha_1)$ \\[3pt]
 \hline
  \end{tabular}

 \vskip 10 pt \noindent
where $(L, \alpha_i)$ denotes the homogeneous submanifold of $S$  associated to a subdiagram of type $(L, \alpha_i)$, and $(L, \alpha_i, \alpha_j)$ denotes the  horospherical variety embedded in $S$ as in Proposition \ref{linear sections}, and $\mathbb P_{A_3}^3$ and $\mathbb P_{A_2}^2$ denote $\mathbb P^3$ and $\mathbb P^2$ in $(B_3, \alpha_3)$ which are not associated to a subdiagram of the Dynkin diagram of $S$.
 \end{proposition}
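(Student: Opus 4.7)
The plan is to apply Proposition~\ref{necessary conditions} to reduce the classification to that of smooth closures of $(L\cap B)$-orbits in $\mathcal{C}_o(S)$, where $L\cap B \cong B^1 \times B^2$ is the Borel of the Levi $L$ with $B^1$, $B^2$ the Borels of the two factors of $SL(E)\times SL(Q)$. The analysis proceeds via the $B^2$-equivariant fibration $\pi\colon \mathcal{C}_o(S)\to \mathbb{P}(Q)$: since $B^2$ has only two orbits on $\mathbb{P}(Q)$, a fixed point $[q]$ and its open complement, every $(L\cap B)$-orbit $\mathcal{O}$ in $\mathcal{C}_o(S)$ satisfies either $\pi(\overline{\mathcal{O}})=\{[q]\}$ (the \emph{fiber case}, corresponding to the left column of the table) or $\pi(\overline{\mathcal{O}})=\mathbb{P}(Q)$ (the \emph{surjective case}, corresponding to the right column).

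In the fiber case, $\overline{\mathcal{O}}$ lies in the quadric fiber $\pi^{-1}([q])\cong \mathbb{Q}^4\subset \mathbb{P}(E\oplus E^*)$, and the effective action of $L\cap B$ on this fiber factors through $B^1\times T^2$, where $T^2\subset B^2$ is the maximal torus and acts on the $E$-coordinates and the $E^*$-coordinates by distinct characters. I combine Lemma~\ref{classification in Q4}, which classifies smooth $B^1$-orbit closures meeting the open $SL(E)$-orbit, with a direct enumeration of smooth $B^1$-orbit closures contained in the closed $SL(E)$-orbits $\mathbb{P}(E)$ and $\mathbb{P}(E^*)$. Among the resulting candidates I retain those that are actually realised as VMRTs of smooth Schubert varieties; this produces the five entries of the left column, each identified with its corresponding Schubert variety via the subdiagram description for the homogeneous entries and via Proposition~\ref{linear sections} for the others.

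In the surjective case, $\overline{\mathcal{O}}$ is determined by its fiber over $[q]$ together with the $B^2$-equivariant extension to the other fibers. I show that any such orbit closure has the form $\mathcal{C}_o(S)\cap \mathbb{P}((V\otimes Q)\oplus (W\otimes S^2 Q))$ for a $B^1$-stable pair $(V,W)$ with $V\subset E$ and $W\subset \wedge^2 E$ satisfying the compatibility imposed by the cone equation $e\wedge f\wedge f'=0$. The admissible pairs are $(E,\wedge^2 E)$, $(F_1,F_1^\perp)$, $(F_2,F_2^\perp)$, and $(V_1,W_1)$, together with the degenerate section $\mathbb{P}(e\otimes Q)$ corresponding to $V=\langle e\rangle$, $W=0$; these yield the five entries of the right column, matched with the listed Schubert varieties by Proposition~\ref{linear sections} and the standard identification via subdiagrams.

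The main obstacle will be the elimination of spurious candidates in the fiber case, in particular $\mathbb{P}(F_1\oplus F_1^\perp)$, $\mathbb{P}(V_1\oplus W_1)$, and the smooth $B^1$-orbit closures inside $\mathbb{P}(E^*)$. These satisfy the $(L\cap B)$-invariance demanded by Proposition~\ref{necessary conditions} but do not arise as VMRTs of smooth Schubert varieties. I plan to rule them out by comparing the dimension and weight data of any putative Schubert variety with such a VMRT against the enumeration of smooth Schubert varieties of $S$ of the corresponding dimension, using Lemma~\ref{deformtion of P2 in Q4} and the subsequent Remark to control the possible deformations. In each case one finds that the smooth Schubert varieties of the correct dimension already appear in the table with a different VMRT, forcing a contradiction and thereby excluding the spurious candidates.
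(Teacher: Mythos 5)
Your overall reduction coincides with the paper's: by Proposition~\ref{necessary conditions} the VMRT of a smooth Schubert variety at the base point is the smooth closure of a $(B^1\times B^2)$-orbit in $\mathcal C_o(S)$, and the candidates are organized through the fibration over $\mathbb P(Q)$ together with Lemma~\ref{classification in Q4}. Your dichotomy (image in $\mathbb P(Q)$ equal to the $B^2$-fixed point versus all of $\mathbb P(Q)$) is not the paper's (the paper splits according to whether $S_0$ contains a general line, i.e.\ whether $\mathcal C_o(w^{-1}S_0)$ meets $\mathcal C_o(S)^{gen}$; note that $\mathbb P(e\otimes Q)$ is surjective but has no general direction, while the full fiber $\mathbb Q^4$ sits over a point but does), but both dichotomies are legitimate and lead to the same list of candidates.

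The genuine gap is in the elimination of the spurious candidates, which you correctly single out as the main obstacle but then propose to resolve by a circular argument. You plan to compare ``against the enumeration of smooth Schubert varieties of $S$ of the corresponding dimension'' and to conclude that the smooth Schubert varieties of that dimension ``already appear in the table with a different VMRT.'' No such enumeration is available at this stage: the completeness of the table in each dimension is precisely what the proposition is establishing, and the paper explicitly avoids the alternative route of running through $\mathcal W^P$ and checking smoothness via Zariski tangent spaces. Knowing that some length-$d$ elements of $\mathcal W^P$ give smooth Schubert varieties with VMRTs in the table does not preclude another length-$d$ element from giving a smooth Schubert variety with one of the spurious VMRTs. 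The paper's actual mechanism is direct and non-circular: by the proof of Lemma~\ref{deformtion of P2 in Q4}, the $P$-orbits of $\mathbb P((F_1\otimes q)\oplus(F_1^{\perp}\otimes q^2))$ and $\mathbb P((V_1\otimes q)\oplus(W_1\otimes q^2))$ are not closed --- they degenerate under the unipotent factor $E^*\otimes Q^*$ of $P$ --- and a subvariety whose $P$-orbit is not closed cannot be the VMRT of a Schubert variety. This closedness criterion is the one idea your proposal is missing; invoking Lemma~\ref{deformtion of P2 in Q4} merely ``to control deformations'' does not substitute for it. A secondary point: in your surjective case the asserted list of admissible pairs $(V,W)$ omits any reason for excluding pairs such as $(F_2,0)$ and $(E,0)$, whose associated orbit closures (Segre varieties $\mathbb P(F_2)\times\mathbb P(Q)$ and $\mathbb P(E)\times\mathbb P(Q)$) are smooth, $(L\cap B)$-stable linear sections of $\mathcal C_o(S)$; their exclusion also requires justification beyond the cone equation.
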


 \begin{proof} Recall that the variety
  $\mathcal C_o(S)$ of minimal rational tangents of $S=G/P$ is the projectivization of the cone
 $$\{ e \otimes q + f^* \otimes q^2 : \langle e , f^*\rangle=0, e \in E, f^* \in E^*,  q \in Q \} $$
 in $(E \otimes Q)  \oplus (  E^* \otimes S^2 Q)$, where $E$ is a complex vector space of dimension 3 and $Q$ is a complex vector space of dimension 2, and that the fiber over $[q] \in \mathbb P(Q)$ is $\{ e \otimes q + f^* \otimes q^2 : \langle e ,f^*\rangle=0, e \in E, f^* \in E^*\} \simeq \mathbb Q^4$.
 The semisimple part $L = L^1 \times L^2$ of $P$  is  $SL(E) \times SL(Q)$ and $L \cap B$ is $B^1 \times B^2$, where $B^1$ is a Borel subgroup of $SL(E)$ and $B^2$ is a Borel subgroup of $SL(Q)$.

 Let $S_0$ be a smooth Schubert variety of $S $ and $w \in \mathcal W^P$ be the element corresponding to $S_0$, i.e., $S_0$ is the closure of the $B$-orbit $B.x$ at $x=w.o$.  By  Proposition \ref{necessary conditions}  $\mathcal C_o(w^{-1}S_0)$ is the closure of a  $B^1 \times B^2$-orbit  $B^1 \times B^2 (e \otimes q + f^* \otimes q^2)$, where $(e, f^*) \in E \oplus E^*$ satisfies $\langle e, f^* \rangle=0$ and $q\in Q$. \\ 

 \noindent {\bf Case 1.}  If $S_0$ does not have a general line, then $\mathcal C_o(w^{-1}S_0)$ is contained in $\mathcal C_o(S) \backslash \mathcal C_0(S)^{gen}$, and thus it is contained in $\mathbb P (E \otimes Q)$.  Therefore, $\mathcal C_o(w^{-1}S_0)$ is one of the followings: $\mathbb P(E \otimes q)$, $\mathbb P(F_2 \otimes q)$, $\mathbb P(F_1 \otimes q)$, $\mathbb P(e \otimes Q)$. \\

\noindent {\bf Case 2.} If $S_0$ has a general line, then $\mathcal C_o(w^{-1}S_0)$ intersects  $\mathcal C_o(S)^{gen}$ nontrivially.
 By Lemma \ref{classification in Q4}  the smooth closure  of a $B^1 \times B^2$-orbit  in $\mathcal C_o(S)$ which is a linear section of $\mathcal C_o(S)$ and  intersects $\mathcal C_o(S)^{gen}$, is one of the followings: \\
%

\begin{tabular} {|c|c|}
 \hline
 $\mathcal C_o(S) \cap \mathbb P((E\otimes q) \oplus (E^* \otimes q^2))$ \,\,&\,\, $\mathcal C_o(S)$ \\ [3pt]
 \hline
   $\mathbb P((F_1 \otimes q) \oplus (F_1^{\perp} \otimes q^2))$  \,\,&\,\, $\mathcal C_o(S) \cap \mathbb P((F_1 \otimes Q) \oplus (F_1^{\perp}\otimes S^2Q))$    \\[3pt]
   $\mathbb P((F_2 \otimes q) \oplus (F_2^{\perp}\otimes q^2))$ & $\mathcal C_o(S) \cap \mathbb P((F_2 \otimes Q)\oplus (F_2^{\perp}\otimes S^2Q))$ \\[3pt]
 \hline
  $\mathbb P((V_1 \otimes q)\oplus (W_1 \otimes q^2))$ \,\,&\,\, $\mathcal C_o(S) \cap \mathbb P((V_1 \otimes Q) \oplus (W_1\otimes S^2Q))$ \\[3pt]
 \hline
 \end{tabular}

 \vskip 10 pt \noindent
 where $F_i$ is a subspace of $E$ of dimension $i$ for $i=1,2$ and $V_1$ is a subspace of $E$ of dimension one and $W_1$ is a subspace of $V_1^{\perp}$ of dimension one and $e \in E$ and $q \in Q$.
(Note that $\mathcal C_o(w^{-1}S_0)$ cannot be contained in $\mathbb P(E^* \otimes S^2Q)$.)

Among them, the $P$-orbits of
$\mathbb P((F_1 \otimes q) \oplus (F_1^{\perp} \otimes q^2))$  and $\mathbb P((V_1 \otimes q)\oplus (W_1 \otimes q^2))$ are not closed (see the proof of Proposition \ref{deformtion of P2 in Q4}), so that they cannot be the varieties of minimal rational tangents of Schubert varieties. \\

Combining lists in Case 1 and in  Case 2, we get the desired list.
 \end{proof}

\begin{proposition} \label{F4alpha3} Let   $S$ be the rational homogeneous manifold of type $(F_4, \alpha_3)$ and let $S_0$ be either $(C_2, \alpha_2, \alpha_1)$ or $(B_3, \alpha_2, \alpha_3)$. Then $\mathcal C_x(S_0)$ at a general point  $x \in S_0$  satisfies {\rm(I)} and {\rm(II)} in Proposition \ref{general case - inductive step}.

\end{proposition}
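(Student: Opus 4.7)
The plan is to verify both conditions fiberwise along the projection $\pi\colon\mathcal{C}_x(S)\to\mathbb{P}(Q)\cong\mathbb{P}^1$ coming from the description of $\mathcal{C}_o(S)$ as a bundle with fiber $\mathbb{Q}^4$, feeding Lemmas \ref{classification in Q4} and \ref{deformtion of P2 in Q4} directly into the argument. The restriction $\pi|_{\mathcal{C}_x(S_0)}$ presents $\mathcal{C}_x(S_0)$ as a $\mathbb{P}^2$-bundle over $\mathbb{P}(Q)$ in the $(B_3,\alpha_2,\alpha_3)$ case and as a $\mathbb{P}^1$-bundle in the $(C_2,\alpha_2,\alpha_1)$ case, the fiber over $[q]$ being the linear subspace $\mathbb{P}(F_1\oplus F_1^\perp)$ of Lemma \ref{classification in Q4}(c), respectively the line $\mathbb{P}(V_1\oplus W_1)$ of Lemma \ref{classification in Q4}(d), embedded in $\mathbb{Q}^4_{[q]}$. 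The isotropy $\rho(P)=(SL(E)\times SL(Q))\ltimes(E^*\otimes Q^*)$ restricts on each fiber $\mathbb{Q}^4_{[q]}$ to the $SL(E)\ltimes E^*$-action of Lemma \ref{deformtion of P2 in Q4}, the $E^*$-summand being obtained by evaluating $E^*\otimes Q^*$ at $q$.

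For condition (II), let $\mathcal{C}_t$ be a small deformation of $\mathcal{C}_0=\mathcal{C}_x(S_0)$ inside $\mathcal{C}_x(S)$. Because $\pi|_{\mathcal{C}_0}$ is smooth and surjective onto $\mathbb{P}^1$ with connected fibers, the same holds for $\pi|_{\mathcal{C}_t}$ for $t$ small, so $\mathcal{C}_t$ remains a $\mathbb{P}^k$-bundle over $\mathbb{P}(Q)$ whose fiber over each $[q]$ is a small deformation inside $\mathbb{Q}^4_{[q]}$ of the corresponding fiber of $\mathcal{C}_0$. Lemma \ref{deformtion of P2 in Q4} (respectively the Remark following it) exhibits every such fiberwise deformation as the image of the original under an element of $SL(E)\ltimes E^*$. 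These pointwise choices must then be packaged into a single element of $P_x$; this is made possible by the fact that the fiberwise $E^*$-copy is cut out of $E^*\otimes Q^*$ by evaluation at $q$, so the holomorphic $q$-dependence of the fiberwise normalization is absorbed into the $Q^*$-grading of the unipotent radical, with weights matching the $\mathcal{O}(-1)$ and $\mathcal{O}(-2)$ twists visible in the scroll presentations $\mathbb{P}(\mathcal{O}(-1)\oplus\mathcal{O}(-2)^2)$ and $\mathbb{P}(\mathcal{O}(-1)\oplus\mathcal{O}(-2))$.

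For condition (I), fix a general $\alpha=[e\otimes q+f^*\otimes q^2]\in\mathcal{C}_x(S_0)$ and $h\in P_x$ near the identity with $T_\alpha(h\mathcal{C}_x(S_0))=T_\alpha(\mathcal{C}_x(S_0))$. Since $\pi$ is $P_x$-equivariant and $d\pi$ sends $T_\alpha\mathcal{C}_x(S_0)$ onto $T_{[q]}\mathbb{P}(Q)$, $h$ fixes $[q]$ and preserves the vertical tangent space at $\alpha$; acting on $\mathbb{Q}^4_{[q]}$ through $SL(E)\ltimes E^*$, it therefore stabilizes the tangent space at the smooth point $\alpha$ of the linear $\mathbb{P}^k\subset\mathbb{Q}^4_{[q]}$, and since a linear subspace of $\mathbb{Q}^4$ is determined by any of its tangent spaces, $h$ stabilizes the entire $[q]$-fiber of $\mathcal{C}_x(S_0)$. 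By (II), $h\mathcal{C}_x(S_0)=g\mathcal{C}_x(S_0)$ for some $g\in P_x$ near the identity, so $g^{-1}h$ stabilizes both this fiber and the tangent direction transverse to it at $\alpha$; a short computation in the Lie algebra of $P_x$ using the explicit scroll structure shows that this infinitesimal stabilizer already coincides with the stabilizer of $\mathcal{C}_x(S_0)$, whence $h\mathcal{C}_x(S_0)=\mathcal{C}_x(S_0)$.

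The principal technical obstacle is the patching step in condition (II): promoting the fiberwise $SL(E)\ltimes E^*$-normalizations from Lemma \ref{deformtion of P2 in Q4} into a single element of $P_x$. This hinges on matching the $Q^*$-weights in the unipotent radical $E^*\otimes Q^*$ with the $\mathcal{O}(-1)$ and $\mathcal{O}(-2)$ twists arising in the normal bundle of $\mathcal{C}_x(S_0)$ in $\mathcal{C}_x(S)$, a matching dictated by but still requiring careful verification beyond the scroll presentations recorded above.
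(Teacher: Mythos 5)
Your overall strategy---viewing $\mathcal C_x(S_0)$ as a scroll over $\mathbb P(Q)$ and feeding Lemmas \ref{classification in Q4} and \ref{deformtion of P2 in Q4} into a fiberwise analysis---is exactly the paper's, but the step you yourself flag as ``the principal technical obstacle'' is precisely where the paper's proof does its real work, and your proposal does not close it. The paper avoids any genuine patching problem as follows: a local deformation of $\mathbb P(\mathcal F)$ is again $\mathbb P(\mathcal O(-1)\oplus\mathcal O(-2)^2)$, so it contains a distinguished subbundle whose twist by $\mathcal O(1)$ is trivial; this forces the fiber of that subbundle over every $[q]$ to be $F_{1,t}\otimes q$ for a \emph{single} line $F_{1,t}\subset E$ independent of $q$, which is normalized to $F_1$ by one element of $SL(E)$. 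After that, the proof of Lemma \ref{deformtion of P2 in Q4} shows each fiber of $\mathcal F_t$ is the graph $F_{\lambda\varphi_1}$ for a parameter $\lambda([q])\in(\mathbb C\setminus\{0\})\cup\{\infty\}$, and the decisive observation is that $[q]\mapsto\lambda$ is a holomorphic map from $\mathbb P^1$ to a space isomorphic to $\mathbb C$, hence \emph{constant}. A single group element then suffices; there are no ``pointwise choices'' left to package. Your alternative suggestion---that a $q$-dependent normalization can be absorbed into the $Q^*$-grading of $E^*\otimes Q^*$---is not obviously correct (the effective $E^*$-element produced by $e^*\otimes q^*$ on the fiber over $[q]$ scales like $\langle q^*,q\rangle$, so it vanishes at a point of $\mathbb P(Q)$ and cannot realize an arbitrary holomorphic family), and in any case you explicitly defer its verification, so condition (II) is not established.

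Your argument for (I) also has gaps. The assertion that $h$ must fix $[q]$ does not follow from $T_\alpha(h\mathcal C_x(S_0))=T_\alpha(\mathcal C_x(S_0))$, since $P_x$ acts on $\mathbb P(Q)$ through $SL(Q)$ and both varieties surject onto $\mathbb P(Q)$; and the concluding ``short computation in the Lie algebra of $P_x$'' is asserted rather than performed, which is circular since that computation is essentially the content of (I). The paper instead decomposes $h$ according to $(SL(E)\times SL(Q))\ltimes(E^*\otimes Q^*)$: if the unipotent factor is nontrivial then $h\mathbb P(\mathcal F)=\mathbb P(\mathcal F_\varphi)$ for some $\varphi\neq 0$ and every point of $\mathbb P(\mathcal F)\cap h\mathbb P(\mathcal F)$ is non-generic, contradicting that $\alpha$ is a general point; if $h$ lies in the reductive part, tangency at $\alpha$ already forces $h\mathbb P(\mathcal F)=\mathbb P(\mathcal F)$. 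You would need to supply both of these arguments to have a complete proof.
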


\begin{proof} We will use the same notations as in Proposition \ref{classification of vmrt}.
Assume that $S_0$ is $(B_3, \alpha_2, \alpha_3)$. Then $\mathcal Z_0:=\mathcal C_x(S_0)$ is the linear section of $\mathcal Z:=\mathcal C_x(S)$ by $\mathbb P((F_1 \otimes Q) \oplus (F_1^{\perp} \otimes S^2Q)$ and thus $\mathcal Z_0$ is   the projectivization $\mathbb P(\mathcal F)$ of the vector bundle $\mathcal F$ of  rank 3 over $\mathbb P(Q)$, whose fiber over $[q] \in \mathbb P(Q)$ is $(F_1 \otimes q) \oplus (F_1^{\perp} \otimes q^2)$. Hence $\mathcal F$ is isomorphic to $\mathcal O(-1) \oplus \mathcal O(-2)^2$ over $\mathbb P^1$.

Any local deformation $\mathbb P(\mathcal F_t)$ of $\mathbb P(\mathcal F)$ is also isomorphic to $\mathbb P(\mathcal O(-1) \oplus \mathcal O(-2)^2)$, so that there is a subbundle $\mathcal F_{1,t} \subset \mathcal F_t$ such that $\mathcal F_{1,t} \otimes \mathcal O(1)$ is a trivial vector bundle of rank one. Then there is a subspace $F_{1,t} \subset E$ of dimension one such that  the fiber of $\mathcal F_{1,t}$ at $[q] \in \mathbb P(Q)$ is $F_{1,t} \otimes q$. By acting an element of $SL(E)$, we may assume that $F_{1,t}=F_1$.

By the proof of Lemma \ref{deformtion of P2 in Q4}, the fiber of $\mathcal F_{t}$ at $[q] \in \mathbb P(Q)$ is the graph $F_{\lambda \varphi_1}$ of $\lambda \varphi_1:E \rightarrow E^*$ for some $\lambda \not=0\in \mathbb C \cup \{\infty\}$, depending on $[q]$. Since the assignment $[q] \in \mathbb P(Q) \mapsto \lambda \in (\mathbb C -\{0\})\cup \{\infty\}$ is holomorphic, it is constant. Consequently, $\mathcal F_t$ is $\mathcal F$ up to the action of $(SL(E) \times SL(Q))\ltimes (E^* \otimes Q^*)$.

For $h \in (SL(E) \times SL(Q))\ltimes (E^* \otimes Q^*)$ having nontrivial factor in $E^* \otimes Q^*$, there is a nonzero linear function $\varphi:E \rightarrow E^*$ such that  $h\mathbb P(\mathcal F)=\mathbb P(\mathcal F_{\varphi})$, where $\mathcal F_{\varphi}$ is the vector bundle over $\mathbb P(Q)$ whose fiber at $[q] \in \mathbb P(Q)$ is $\{e \otimes q + \varphi(e) \otimes q^*: e \in E\}$. Then  any point  in $ \mathbb P(\mathcal F) \cap h\mathbb P(\mathcal F)$ is non-generic. If $h$ is in $SL(E) \times SL(Q)$ and $ \mathbb P(\mathcal F)$ is   tangent to $h\mathbb P(\mathcal F))$ at $\alpha \in  \mathbb P(\mathcal F) \cap h\mathbb P(\mathcal F))$, then $h\mathbb P(\mathcal F) = \mathbb P(\mathcal F)$.

By a similar arguments we get the desired results when  $S_0$ is $(C_2, \alpha_2, \alpha_1)$.
\end{proof}

 \begin{proposition} \label{classification F4 alpha3} Let $S=G/P$ be the rational homogeneous manifold  of type $(F_4, \alpha_3)$. Then a  smooth Schubert variety of $S$ is one of the following:
  \begin{enumerate}
  \item   a  homogeneous submanifold associated to  a subdiagram  of the marked Dynkin diagram of $S$;
  \item   a linear space;
  \item   $(B_3, \alpha_2, \alpha_3)$ or $(C_2, \alpha_1, \alpha_2)$, embedded as in Proposition \ref{linear sections}.
  \end{enumerate}
 \end{proposition}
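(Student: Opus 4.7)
The strategy is to combine the VMRT classification of Proposition \ref{classification of vmrt} with the uniqueness principle of Proposition \ref{general case - inductive step}(1). Given a smooth Schubert variety $S_0 \subset S$ and a general point $x \in S_0$, Proposition \ref{classification of vmrt} constrains $\mathcal{C}_x(S_0)$ to one of the shapes listed in its first table and exhibits in its second table a specific smooth Schubert variety $S_0'$ whose VMRT at a general point has the corresponding shape. Inspection confirms that each of these model Schubert varieties belongs to one of the three types in the statement: a homogeneous submanifold associated to a subdiagram of the marked Dynkin diagram ($S$, $(C_3,\alpha_2)$, $(B_3,\alpha_3)$, $(A_2,\alpha_1)$), a linear subspace ($\mathbb{P}^3_{A_3}$, $\mathbb{P}^2_{A_2}$, or $\mathbb{P}^1 = (A_1,\alpha_1)$), or one of the two horospherical embeddings from Proposition \ref{linear sections}.

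To conclude that $S_0$ itself is one of these models — not merely that it shares a VMRT with one — I would apply Proposition \ref{general case - inductive step}(1) with $S_0'$ as the model and $Z := S_0$ as the ambient variety. After translating by an appropriate element of $G$ so that $x$ is a general point of both $S_0$ and $S_0'$ with $\mathcal{C}_x(S_0) = \mathcal{C}_x(S_0')$, and using that $S_0$ is smooth and uniruled by the lines of $S$ lying on it, the proposition gives $S_0' \subset S_0$; since both are irreducible of the same dimension, $S_0 = S_0'$, placing $S_0$ in the stated list.

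The hypothesis needed for this step is that the model $S_0'$ satisfies conditions (I) and (II) on its VMRT at a general point. For the two non-homogeneous horospherical models $(B_3, \alpha_2, \alpha_3)$ and $(C_2, \alpha_2, \alpha_1)$, this is exactly the content of Proposition \ref{F4alpha3}. For the homogeneous models $S$, $(C_3, \alpha_2)$, $(B_3, \alpha_3)$ and $(A_2, \alpha_1)$ this is covered by Proposition 3.7 of \cite{HoM}. I expect the main technical obstacle to be verifying (I) and (II) for the linear Schubert varieties $\mathbb{P}^3_{A_3}$ and $\mathbb{P}^2_{A_2}$, which sit inside the spinor variety $(B_3, \alpha_3) \subset S$ in a non-tautological way and are therefore not automatically handled by the cited homogeneous rigidity results. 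For these one must argue directly, using that any local deformation of a linear subvariety of $S$ through $x$ remains a linear subvariety of the affine cone over $\mathcal{C}_x(S)$, and invoking the Landsberg–Manivel classification \cite{LM} to match each VMRT shape with a unique $G$-orbit of linear Schubert variety inside $S$.
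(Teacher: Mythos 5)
Your proposal follows the paper's proof exactly: the paper derives Proposition \ref{classification F4 alpha3} precisely by combining the VMRT list of Proposition \ref{classification of vmrt}, the verification of conditions (I) and (II) in Proposition \ref{F4alpha3}, and the inclusion criterion of Proposition \ref{general case - inductive step}(1). Your extra worry about verifying (I) and (II) for the linear models $\mathbb P^3_{A_3}$ and $\mathbb P^2_{A_2}$ is not needed for this statement, since when $\mathcal C_x(S_0)$ is one of the linear shapes the proposition only claims that $S_0$ is \emph{some} linear space (and indeed the paper excludes linear Schubert varieties from all of its rigidity assertions).
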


\begin{proof} Proposition \ref{classification of vmrt} and
 Proposition  \ref{F4alpha3} and Proposition \ref{general case - inductive step}.
\end{proof}

\begin{proposition} \label{rigidity F4 alpha3}
Let   $S$ be the rational homogeneous manifold of type $(F_4, \alpha_3)$ and let $S_0$ be either $(C_2, \alpha_2, \alpha_1)$ or $(B_3, \alpha_2, \alpha_3)$. Then any local deformation of $S_0$ in $S$ is induced by the action of $G$.
\end{proposition}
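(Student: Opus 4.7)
\medskip

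The plan is to deduce this proposition immediately from the two inputs already assembled: Proposition \ref{general case - inductive step}(2), which converts the infinitesimal rigidity conditions (I) and (II) on the variety of minimal rational tangents into a global deformation rigidity statement for $S_0$ inside $S$, and Proposition \ref{F4alpha3}, which verifies (I) and (II) for precisely the two Schubert varieties $(C_2,\alpha_2,\alpha_1)$ and $(B_3,\alpha_2,\alpha_3)$ in $S=(F_4,\alpha_3)$.

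First I would check that $S_0$ satisfies the running hypotheses of Proposition \ref{general case - inductive step}: by Proposition \ref{linear sections}, each of $(C_2,\alpha_2,\alpha_1)$ and $(B_3,\alpha_2,\alpha_3)$ is embedded in $S$ as a smooth Schubert variety, so both the smoothness and Schubert hypotheses are in force. In particular, by Proposition 3.1 of \cite{HoM} (quoted in Section \ref{section vmrt of Schubert varieties}), $S_0$ is covered by lines of $S$ and the variety of minimal rational tangents $\mathcal C_x(S_0)$ at a general point $x \in S_0$ is given by the intersection $\mathcal C_x(S)\cap \mathbb P(T_xS_0)$, so the VMRT model computed in Section 3 (namely $\mathbb P(\mathcal O(-1)\oplus\mathcal O(-2)^2)$ or $\mathbb P(\mathcal O(-1)\oplus\mathcal O(-2))$) is the correct one to feed into Proposition \ref{general case - inductive step}.

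Next I would invoke Proposition \ref{F4alpha3}, which says exactly that $\mathcal C_x(S_0)$ satisfies conditions (I) and (II) at a general $x\in S_0$. With this in hand, the hypotheses of Proposition \ref{general case - inductive step} are fully met, and conclusion (2) of that proposition gives precisely what we want: any local deformation of $S_0$ in $S$ is induced by the action of $G$.

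In short, there is no new obstacle to overcome here — the proposition is the combination of the VMRT computation (Proposition \ref{F4alpha3}) with the general VMRT--to--global-rigidity principle (Proposition \ref{general case - inductive step}). The only thing to be a little careful about is that (I) and (II) are required at a \emph{general} point of $S_0$ rather than at the base point; this is why Proposition \ref{necessary conditions} was upgraded from $x_w$ to an arbitrary point in the open $\mathrm{Stab}_G(S_0)$-orbit, and that point has already been handled in the proof of Proposition \ref{F4alpha3}.
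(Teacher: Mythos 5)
Your proposal is correct and matches the paper's proof exactly: the paper also deduces the proposition in one line by combining Proposition \ref{F4alpha3} (which verifies conditions (I) and (II)) with Proposition \ref{general case - inductive step}(2). The extra care you take in checking the hypotheses is sound but not something the paper spells out.
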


\begin{proof}
By   Proposition \ref{general case - inductive step} and Proposition \ref{F4alpha3}, any local deformation of $S_0$ in $S$ is induced by the action of $G$.
 \end{proof}

\section{$(F_4, \alpha_4)$-case}

  In this section we will consider the case when $S$ is the rational homogeneous manifold of type $(F_4, \alpha_4)$ and prove  that any smooth Schubert variety of rational homogeneous manifold $S$ of type $(F_4, \alpha_4)$ is linear (Proposition \ref{classification F4 alpha4}). We will use that $S$ is a hyperplane section of another rational homogenoeus manifold $S'$ of Picard number one, which is associated to a long simple roots, and that any smooth Schubert variety of $S'$ is a homogeneous submanifold associated to a subdiagram of the marked Dynkin diagram of $S'$ (Proposition 3.7 of \cite{HoM}).  \\

Let $G$ be the simple group of type $F_4$ and let $W$ be the irreducible $G$-representation space of highest weight $\varpi_4$ and let $w_4$ be a highest weight vector in $W$. Then  the $G$-orbit of  $x_0:=[w_4]$ in $\mathbb P(W)$ is   the rational homogeneous manifold $S=G/P$ of type $(F_4, \alpha_4)$.
Let $G'$ be the simple Lie group of type $E_6$ and
  let $W'$ be the irreducible $E_6$-representation space of highest weight $\varpi_6$ and let $w_6'$ be a highest weight vector in $W'$. Then the $G'$-orbit of $x_0':=[w'_6] \in \mathbb P(W')$ is the rational homogeneous manifold $S'=G'/P'$ of type $(E_6, \alpha_6)$.
$W$ can be embedded into $W'$ equivariantly as a hyperplane with $x_0=x_0'$ and  $S=G/P$ is the hyperplane section of   $S'=G'/P'$   by $\mathbb P(W)$. Here, we use the same notation for the fundamental weights $\varpi_1, \dots, \varpi_4$ of $G$ of type $F_4$ and the fundamental weights $\varpi_1, \dots, \varpi_6$ of $G'$ of type $E_6$, for the simplicity of notations. We will adapt the same convention afterwards as long as it does not make any confusion.

 For $w \in \mathcal W^P$,  let $S(w)$ be the closure of $B$-orbit $B.x_w$ in $S$, and
for $w' \in \mathcal W^{P'}$, let $S'(w')$ be the closure of $B'$-orbit $B'.x_{w'}$ in $S'$, where $x_{w'}:=w'.x_0$.
The inclusion map $\mathcal W_G \hookrightarrow \mathcal W_{G'}$ from the Weyl group of $G$ to the Weyl group of $G'$ induces an injective map $$\mathcal W^P=\mathcal W_G/\mathcal W_P \hookrightarrow \mathcal W^{P'}=\mathcal W_{G'}/\mathcal W_{P'}$$
\noindent (Figure A and Figure B).
Thus for $w \in \mathcal W^{P}$, $B.x_w $ is contained in $B'.x_{w'}$ for a unique $w' \in \mathcal W^{P'}$. Then we have either $B.x_w =B'.x_{w'} \subset \mathbb P(W)$ or $B.x_w  \subsetneq B'.x_{w'} $ and  $B.x_w = B'.x_{w'} \cap \mathbb P(W)$, so that  we have either $S(w) = S'(w') \subset \mathbb P(W)$ or $S(w)  \subsetneq S(w')$ and $S(w) = S'(w') \cap \mathbb P(W)$. In any case we have $S(w) =S(w') \cap \mathbb P(W)$.

\begin{figure}[p]
\begin{subfigure}[b]{.4\textwidth}
\begin{tikzpicture}
   \dynkinnode{0}{0}{$\tilde{S}_1$}
   \dynkinarrow{0}{0.4}{0}{0.8}
   \dynkinnode{-0.5}{0.6}{$s_{\tilde{\alpha}_1}$}
   \dynkinnode{0}{1.2}{$\tilde{S}_2$}
   \dynkinarrow{0}{1.6}{0}{2}
   \dynkinnode{-0.5}{1.8}{$s_{\tilde{\alpha}_3}$}
   \dynkinnode{0}{2.4}{$\tilde{S}_3$}
   \dynkinarrow{0}{2.8}{0}{3.2}
   \dynkinnode{-0.5}{3}{$s_{\tilde{\alpha}_4}$}
   \dynkinnode{0}{3.6}{$\tilde{S}_4$}
   \dynkinarrow{-0.2}{4}{-0.8}{4.4}
   \dynkinarrow{0.2}{4}{0.8}{4.4}
   \dynkinnode{-0.8}{4}{$s_{\tilde{\alpha}_2}$}
   \dynkinnode{0.8}{4}{$s_{\tilde{\alpha}_5}$}
   \dynkinnode{-1}{4.8}{$\tilde{S}_5$}
   \dynkinnode{1}{4.8}{$\tilde{S}_6$}
   \dynkinarrow{-1}{5.2}{-1}{5.6}
   \dynkinarrow{1}{5.2}{1}{5.6}
   \dynkinarrow{0.6}{5}{-0.6}{5.8}
   \dynkinnode{-1.5}{5.4}{$s_{\tilde{\alpha}_5}$}
   \dynkinnode{1.5}{5.4}{$s_{\tilde{s}_6}$}
   \dynkinnode{0}{5}{$s_{\tilde{\alpha}_2}$}
   \dynkinnode{-1}{6}{$\tilde{S}_7$}
   \dynkinnode{1}{6}{$\tilde{S}_8$}
   \dynkinarrow{-1}{6.4}{-1}{6.8}
   \dynkinarrow{1}{6.4}{1}{6.8}
   \dynkinarrow{-0.6}{6.2}{0.6}{7}
   \dynkinnode{-1.5}{6.6}{$s_{\tilde{\alpha}_4}$}
   \dynkinnode{1.5}{6.6}{$s_{\tilde{\alpha}_2}$}
   \dynkinnode{0}{6.2}{$s_{\tilde{s}_6}$}
   \dynkinnode{-1}{7.2}{$\tilde{S}_9$}
   \dynkinnode{1}{7.2}{$\tilde{S}_{10}$}
   \dynkinarrow{-1}{7.6}{-1}{8}
   \dynkinarrow{1}{7.6}{1}{8}
   \dynkinarrow{-0.6}{7.4}{0.6}{8.2}
   \dynkinnode{-1.5}{7.8}{$s_{\tilde{\alpha}_3}$}
   \dynkinnode{1.5}{7.8}{$s_{\tilde{\alpha}_4}$}
   \dynkinnode{0}{7.4}{$s_{\tilde{s}_6}$}
   \dynkinnode{-1}{8.4}{$\tilde{S}_{11}$}
   \dynkinnode{1}{8.4}{$\tilde{S}_{12}$}
   \dynkinarrow{-1.2}{8.8}{-1.8}{9.2}
   \dynkinarrow{-0.8}{8.8}{-0.2}{9.2}
   \dynkinarrow{0.8}{8.8}{0.2}{9.2}
   \dynkinarrow{1.2}{8.8}{1.8}{9.2}
   \dynkinnode{-1.8}{8.8}{$s_{\tilde{\alpha}_1}$}
   \dynkinnode{-0.4}{8.8}{$s_{\tilde{s}_6}$}
   \dynkinnode{0.4}{8.8}{$s_{\tilde{\alpha}_3}$}
   \dynkinnode{1.8}{8.8}{$s_{\tilde{\alpha}_5}$}
   \dynkinnode{-2}{9.6}{$\tilde{S}_{13}$}
   \dynkinnode{0}{9.6}{$\tilde{S}_{14}$}
   \dynkinnode{2}{9.6}{$\tilde{S}_{15}$}
   \dynkinarrow{-1.8}{10}{-1.2}{10.4}
   \dynkinarrow{-0.2}{10}{-0.8}{10.4}
   \dynkinarrow{0.2}{10}{0.8}{10.4}
   \dynkinarrow{1.8}{10}{1.2}{10.4}
   \dynkinnode{-1.8}{10.4}{$s_{\tilde{s}_6}$}
   \dynkinnode{-0.4}{10.4}{$s_{\tilde{\alpha}_1}$}
   \dynkinnode{0.4}{10.4}{$s_{\tilde{\alpha}_5}$}
   \dynkinnode{1.8}{10.4}{$s_{\tilde{\alpha}_3}$}
   \dynkinnode{-1}{10.8}{$\tilde{S}_{16}$}
   \dynkinnode{1}{10.8}{$\tilde{S}_{17}$}
   \dynkinarrow{-1}{11.2}{-1}{11.6}
   \dynkinarrow{1}{11.2}{1}{11.6}
   \dynkinarrow{0.6}{11}{-0.6}{11.8}
   \dynkinnode{-1.5}{11.4}{$s_{\tilde{\alpha}_5}$}
   \dynkinnode{1.5}{11.4}{$s_{\tilde{\alpha}_4}$}
   \dynkinnode{0}{11}{$s_{\tilde{\alpha}_1}$}
   \dynkinnode{-1}{12}{$\tilde{S}_{18}$}
   \dynkinnode{1}{12}{$\tilde{S}_{19}$}
   \dynkinarrow{-1}{12.4}{-1}{12.8}
   \dynkinarrow{1}{12.4}{1}{12.8}
   \dynkinarrow{0.6}{12.2}{-0.6}{13}
   \dynkinnode{-1.5}{12.6}{$s_{\tilde{\alpha}_4}$}
   \dynkinnode{1.5}{12.6}{$s_{\tilde{\alpha}_2}$}
   \dynkinnode{0}{12.2}{$s_{\tilde{\alpha}_1}$}
   \dynkinnode{-1}{13.2}{$\tilde{S}_{20}$}
   \dynkinnode{1}{13.2}{$\tilde{S}_{21}$}
   \dynkinarrow{-1}{13.6}{-1}{14}
   \dynkinarrow{1}{13.6}{1}{14}
   \dynkinarrow{-0.6}{13.4}{0.6}{14.2}
   \dynkinnode{-1.5}{13.8}{$s_{\tilde{\alpha}_3}$}
   \dynkinnode{1.5}{13.8}{$s_{\tilde{\alpha}_1}$}
   \dynkinnode{0}{13.4}{$s_{\tilde{\alpha}_2}$}
   \dynkinnode{-1}{14.4}{$\tilde{S}_{23}$}
   \dynkinnode{1}{14.4}{$\tilde{S}_{22}$}
   \dynkinarrow{-0.8}{14.8}{-0.2}{15.2}
   \dynkinarrow{0.8}{14.8}{0.2}{15.2}
   \dynkinnode{-0.8}{15.2}{$s_{\tilde{\alpha}_2}$}
   \dynkinnode{0.8}{15.2}{$s_{\tilde{\alpha}_3}$}
   \dynkinnode{0}{15.6}{$\tilde{S}_{24}$}
   \dynkinarrow{0}{16}{0}{16.4}
   \dynkinnode{-0.5}{16.2}{$s_{\tilde{\alpha}_4}$}
   \dynkinnode{0}{16.8}{$\tilde{S}_{25}$}
   \dynkinarrow{0}{17.2}{0}{17.6}
   \dynkinnode{-0.5}{17.4}{$s_{\tilde{\alpha}_5}$}
   \dynkinnode{0}{18}{$\tilde{S}_{26}$}
   \dynkinarrow{0}{18.4}{0}{18.8}
   \dynkinnode{-0.5}{18.6}{$s_{\tilde{s}_6}$}
   \dynkinnode{0}{19.2}{$\tilde{S}_{27}$}
\end{tikzpicture}
\caption{[ Figure A : Hasse diagram of $S'$ ]}
\end{subfigure}
\hspace{2cm}
\begin{subfigure}[b]{.4\textwidth}
\begin{tikzpicture}
   \dynkinnode{0}{0}{$S_1$}
   \dynkinarrow{0}{0.4}{0}{0.8}
   \dynkinnode{-0.5}{0.6}{$s_{\alpha_4}$}
   \dynkinnode{0}{1.2}{$S_2$}
   \dynkinarrow{0}{1.6}{0}{2}
   \dynkinnode{-0.5}{1.8}{$s_{\alpha_3}$}
   \dynkinnode{0}{2.4}{$S_3$}
   \dynkinarrow{0}{2.8}{0}{3.2}
   \dynkinnode{-0.5}{3}{$s_{\alpha_2}$}
   \dynkinnode{0}{3.6}{$S_4$}
   \dynkinarrow{-0.2}{4}{-0.8}{4.4}
   \dynkinarrow{0.2}{4}{0.8}{4.4}
   \dynkinnode{-0.8}{4}{$s_{\alpha_1}$}
   \dynkinnode{0.8}{4}{$s_{\alpha_3}$}
   \dynkinnode{-1}{4.8}{$S_5$}
   \dynkinnode{1}{4.8}{$S_6$}
   \dynkinarrow{-1}{5.2}{-1}{5.6}
   \dynkinarrow{1}{5.2}{1}{5.6}
   \dynkinarrow{0.6}{5}{-0.6}{5.8}
   \dynkinnode{-1.5}{5.4}{$s_{\alpha_3}$}
   \dynkinnode{1.5}{5.4}{$s_{\alpha_4}$}
   \dynkinnode{0}{5}{$s_{\alpha_1}$}
   \dynkinnode{-1}{6}{$S_7$}
   \dynkinnode{1}{6}{$S_8$}
   \dynkinarrow{-1}{6.4}{-1}{6.8}
   \dynkinarrow{1}{6.4}{1}{6.8}
   \dynkinarrow{-0.6}{6.2}{0.6}{7}
   \dynkinnode{-1.5}{6.6}{$s_{\alpha_2}$}
   \dynkinnode{1.5}{6.6}{$s_{\alpha_1}$}
   \dynkinnode{0}{6.2}{$s_{\alpha_4}$}
   \dynkinnode{-1}{7.2}{$S_9$}
   \dynkinnode{1}{7.2}{$S_{10}$}
   \dynkinarrow{-1}{7.6}{-1}{8}
   \dynkinarrow{1}{7.6}{1}{8}
   \dynkinarrow{-0.6}{7.4}{0.6}{8.2}
   \dynkinnode{-1.5}{7.8}{$s_{\alpha_3}$}
   \dynkinnode{1.5}{7.8}{$s_{\alpha_2}$}
   \dynkinnode{0}{7.4}{$s_{\alpha_4}$}
   \dynkinnode{-1}{8.4}{$S_{11}$}
   \dynkinnode{1}{8.4}{$S_{12}$}
   \dynkinarrow{-1}{8.8}{-1}{9.2}
   \dynkinarrow{1}{8.8}{1}{9.2}
   \dynkinnode{-1.5}{9}{$s_{\alpha_4}$}
   \dynkinnode{1.5}{9}{$s_{\alpha_3}$}
   \dynkinnode{-1}{9.6}{$S_{16}$}
   \dynkinnode{1}{9.6}{$S_{17}$}
   \dynkinarrow{-1}{10}{-1}{10.4}
   \dynkinarrow{1}{10}{1}{10.4}
   \dynkinarrow{0.6}{9.8}{-0.6}{10.6}
   \dynkinnode{-1.5}{10.2}{$s_{\alpha_3}$}
   \dynkinnode{1.5}{10.2}{$s_{\alpha_2}$}
   \dynkinnode{0}{9.8}{$s_{\alpha_4}$}
   \dynkinnode{-1}{10.8}{$S_{18}$}
   \dynkinnode{1}{10.8}{$S_{19}$}
   \dynkinarrow{-1}{11.2}{-1}{11.6}
   \dynkinarrow{1}{11.2}{1}{11.6}
   \dynkinarrow{0.6}{11}{-0.6}{11.8}
   \dynkinnode{-1.5}{11.4}{$s_{\alpha_2}$}
   \dynkinnode{1.5}{11.4}{$s_{\alpha_1}$}
   \dynkinnode{0}{11}{$s_{\alpha_4}$}
   \dynkinnode{-1}{12}{$S_{20}$}
   \dynkinnode{1}{12}{$S_{21}$}
   \dynkinarrow{-1}{12.4}{-1}{12.8}
   \dynkinarrow{1}{12.4}{1}{12.8}
   \dynkinarrow{-0.6}{12.2}{0.6}{13}
   \dynkinnode{-1.5}{12.6}{$s_{\alpha_3}$}
   \dynkinnode{1.5}{12.6}{$s_{\alpha_4}$}
   \dynkinnode{0}{12.2}{$s_{\alpha_1}$}
   \dynkinnode{-1}{13.2}{$S_{23}$}
   \dynkinnode{1}{13.2}{$S_{22}$}
   \dynkinarrow{-0.8}{13.6}{-0.2}{14}
   \dynkinarrow{0.8}{13.6}{0.2}{14}
   \dynkinnode{-0.8}{14}{$s_{\alpha_1}$}
   \dynkinnode{0.8}{14}{$s_{\alpha_3}$}
   \dynkinnode{0}{14.4}{$S_{24}$}
   \dynkinarrow{0}{14.8}{0}{15.2}
   \dynkinnode{-0.5}{15}{$s_{\alpha_2}$}
   \dynkinnode{0}{15.6}{$S_{25}$}
   \dynkinarrow{0}{16}{0}{16.4}
   \dynkinnode{-0.5}{16.2}{$s_{\alpha_3}$}
   \dynkinnode{0}{16.8}{$S_{26}$}
   \dynkinarrow{0}{17.2}{0}{17.6}
   \dynkinnode{-0.5}{17.4}{$s_{\alpha_4}$}
   \dynkinnode{0}{18}{$S_{27}$}
\end{tikzpicture}
\caption{[ Figure B : Hasse diagram of $S$ ]}
\end{subfigure}
\end{figure}

By using this relation between $\mathcal W^{P}$ and $\mathcal W^{P'}$ and the description of the Zariski tangent space $T_{x_0}S(w)$ of the Schubert variety $S(w)$ at the base point $x_0$ (Theorem 3.2 of \cite{Po}) we can show that the dimension of $T_{x_0}S(w)$ is greater than the length of $w$  unless $S(w)$ is a linear space, so that there is no smooth Schubert variety other than linear spaces in $S$. Instead of doing this, we apply the theory of the variety of minimal rational tangents again as in the previous section for the unity of the method.     \\

The semisimple part of the reductive part $L$ of $P$ is of type $B_3$ and
  the variety $\mathcal Z:=\mathcal C_{x_0}(S)$ of minimal rational tangents of $S$ at $x_0$
 is the closure of $L$-orbit of $[v_1+v_3]$ in $\mathbb P(V)$, where $V$ is the direct sum $V(\varpi_1) \oplus V(\varpi_3)$, where $V(\varpi_i)$ is the $B_3$-representation space of highest weight $\varpi_i$ for $i=1,2,3$ (see \cite{HwM05}). $\mathcal Z$ is smooth and is of Picard number one and is uniruled by lines lying on $\mathcal Z$.

 Let $z_0:=[v_1] \in \mathbb P(V)$. Then the $P$-orbit of $z_0$ is open in $\mathcal Z$ and the $L$-orbit of $z_0$ is closed. Let $Q$ denote the isotropy group  of $L$ at $z_0$.   Then the semisimple part of  the reductive part $H$  of $Q$ is of type $B_2$ and the variety $\mathcal A:=\mathcal C_{z_0}(\mathcal Z)$ of minimal rational tangents of $\mathcal Z$ at   $z_0$ is the closure of $H$-orbit of $[u_1+u_2]$, where $u_i$ is a highest weight vector of $B_2$-representation space $U(\varpi_i)$ of highest weight $\varpi_i$ for $i=1,2$.

   Let $\mathcal X$ be the closure  of a $H \cap B$-orbit in $\mathcal Z$.  As in the case of Schubert varieties, for a point $x$ in the open $H\cap B$-orbit in $\mathcal X$, we define the variety $\mathcal C_{x}(\mathcal X)$ of minimal rational tangents by the set of tangent directions of lines lying on $\mathcal X$ passing through $x$.

\begin{proposition} \label{classification F4 alpha4} Let $S=G/P$ be the rational homogeneous manifold of type $(F_4, \alpha_4)$. Then any  smooth Schubert variety of $S$ other than $S$ itself is linear.
\end{proposition}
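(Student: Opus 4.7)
The plan is to apply the iterated VMRT method of Section 3 one level deeper. By Proposition \ref{necessary conditions}, any smooth Schubert variety $S_0 \subset S$ has VMRT $\mathcal{C}_{x_0}(S_0)$ equal to the closure of an $(L \cap B)$-orbit in $\mathcal{Z} = \mathcal{C}_{x_0}(S) = (B_3, \alpha_1, \alpha_3)$. I would classify all smooth such orbit closures, in analogy with Proposition \ref{classification of vmrt}, and show that each either yields a linear Schubert variety or exhausts all of $\mathcal{Z}$, forcing $S_0 = S$.

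The classification splits into two cases. \emph{Case 1}: if $\mathcal{C}_{x_0}(S_0)$ is contained in one of the two closed $L$-orbits $L.[v_1] \cong (B_3,\alpha_1)$ or $L.[v_3] \cong (B_3,\alpha_3)$ of $\mathcal{Z}$, then since these closed orbits are themselves rational homogeneous varieties associated to long simple roots, Proposition 3.7 of \cite{HoM} classifies all smooth $(L \cap B)$-orbit closures therein as linear spaces or homogeneous submanifolds; combined with the fact that $S_0$ must be uniruled by lines of $S$ whose tangents lie in the chosen closed orbit, a direct inspection of the Hasse diagram in Figure B forces $S_0$ to be a linear subspace of $S$. \emph{Case 2}: if $\mathcal{C}_{x_0}(S_0)$ meets the open $L$-orbit of $\mathcal{Z}$, then I would iterate once more, computing the second-level VMRT $\mathcal{C}_{z_0}(\mathcal{C}_{x_0}(S_0)) \subset \mathcal{A} = (B_2,\alpha_1,\alpha_2)$, and showing, by an analysis analogous to Lemma \ref{classification in Q4} and Lemma \ref{deformtion of P2 in Q4} now in the $B_2$-geometry of $\mathcal{A}$, that the only smooth $(H \cap B)$-orbit closure in $\mathcal{A}$ compatible with this situation is all of $\mathcal{A}$; this would force $\mathcal{C}_{x_0}(S_0) = \mathcal{Z}$ and hence $S_0 = S$.

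The main obstacle is the second-level step: one must enumerate the smooth $(L \cap B)$-orbit closures in the horospherical variety $\mathcal{Z}$ and verify that every non-linear candidate either fails smoothness, fails to be a closed $P$-orbit in the Chow scheme (so cannot come from a Schubert variety), or violates the tangency-rigidity conditions (I) and (II) of Proposition \ref{general case - inductive step}. Once this is established, Proposition \ref{general case - inductive step} together with the explicit identification of each smooth candidate via the Hasse diagram of Figure B completes the proof that every smooth Schubert variety of $S$ other than $S$ itself is a linear subspace.
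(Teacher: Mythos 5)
Your overall strategy (iterate the VMRT construction one level deeper, landing in $\mathcal A=(B_2,\alpha_1,\alpha_2)$) is the same as the paper's, but your proposal skips the one step that actually requires a new idea. To run the second-level argument you must know that $\mathcal A_0=\mathcal C_{z_0}(g^{-1}\mathcal Z_0)$ is invariant under the Borel subgroup $H\cap B$, so that it is the closure of an $(H\cap B)$-orbit in $\mathcal A$. Proposition \ref{necessary conditions} does \emph{not} give this: its proof uses that the ambient variety is a rational homogeneous manifold, whereas $\mathcal Z=(B_3,\alpha_1,\alpha_3)$ is only horospherical. The paper isolates exactly this point as Lemma \ref{necessary conditions inductive step} and proves it by realizing $S=(F_4,\alpha_4)$ as a hyperplane section of $S'=(E_6,\alpha_6)$, whose first-level VMRT $\mathcal Z'=(D_5,\alpha_5)$ \emph{is} homogeneous, and transferring the $H'\cap B'$-invariance down to $\mathcal A=\mathcal A'\cap\mathbb P(T_{z_0}\mathcal Z)$. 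Your proposal treats the iteration as automatic, so as written it has a genuine gap precisely where the work lies.

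There is also a logical error in your Case 2. It is not true that the only admissible smooth $(H\cap B)$-orbit closure is all of $\mathcal A$; that would force every smooth Schubert variety containing a general line to equal $S$, contradicting the fact that a general line $\mathbb P^1\subset S$ is itself a smooth Schubert variety meeting $\mathcal Z^{gen}$. The correct statement, and the one the paper uses, is that every smooth $(H\cap B)$-orbit closure in $\mathcal A=Gr_{\omega}(2,\mathbb C^5)$ \emph{other than} $\mathcal A$ is linear; from this one concludes that $\mathcal A_0$ is linear, hence $\mathcal Z_0$ is linear, hence $S_0$ is linear (rather than $S_0=S$). Finally, your Case 1 split by closed $L$-orbits does not match the paper's dichotomy (contains a general line or not): the paper's non-general case lands only in $\mathcal Z\cap\mathbb P(V(\varpi_3))\cong(B_3,\alpha_3)$, where all smooth Schubert varieties are linear; if you instead allow $\mathcal Z_0$ inside the quadric $L.[v_1]\cong(B_3,\alpha_1)$ you must still rule out non-linear sub-quadrics, which ``inspection of the Hasse diagram'' does not do by itself.
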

 
\begin{proof}
Let $S_0$ be a Schubert variety of type $w$, i.e., the closure of $B$-orbit of $x_w:=wx_0$, where $w \in \mathcal W^P$.
 By   Proposition \ref{necessary conditions},  $\mathcal C_{x_w}(S_0)$    is invariant under the action of the Borel subgroup $w(L \cap B)$ of $w(L)$.
  Thus   $\mathcal Z_0:=\mathcal C_{x_0}(w^{-1}S_0)$  is invariant under the action of $L \cap B$.

Assume that $S_0$ is smooth. Then $\mathcal Z_0$ is smooth and is the closure of an $L \cap B$-orbit in $\mathcal Z$ (Proposition \ref{necessary conditions}). It suffices to show that $\mathcal Z_0$ is linear.
As in the case when $S$ is of type $(F_4, \alpha_3)$, we may be able to classify $L \cap B$-orbits in $\mathcal Z$ and to determine which closures are smooth. Instead of doing this, we will prove that the variety $\mathcal C_z(\mathcal Z_0)$ of minimal rational tangents of $\mathcal Z_0$ at a general point $z \in \mathcal Z_0$  is linear, by showing that it is the closure of a $(H \cap B)_z$-orbit in $\mathcal C_z(\mathcal Z)$ and by using that any smooth closure of $(H \cap B)$-orbit in $\mathcal A=\mathcal C_{z_0}(\mathcal Z)$ is linear.  

If $S_0$ does not contain a general line, then $\mathcal Z_0$ is contained in $ \mathcal Z \cap \mathbb P(V(\varpi_3))$ which is a rational homogeneous manifold of type $(B_3, \alpha_3)$, and thus $\mathcal Z_0$ is linear because any smooth Schubert variety of the rational homogeneous manifold of type $(B_3, \alpha_3)$ is linear.

From now on, we will assume that $S_0$   contains a general line, i.e.,  $\mathcal Z_0$ intersects $\mathcal Z^{gen}=\mathcal Z - \mathcal Z \cap \mathbb P(V(\omega_3))$ nontrivially.
$\mathcal Z_0$ is uniruled by lines in $\mathcal Z$
because $L \cap B$ has an open orbit in $\mathcal Z_0$.
Let $z=g z_0 $, where $g\in P$, be a   point in the open $L \cap B$-orbit in $\mathcal Z_0$.
By the same arguments as in the proof of Proposition 3.1 of \cite{HoM},    the variety $\mathcal A_0:=\mathcal C_{z_0}(g^{-1}\mathcal Z_0)$  of minimal rational tangents of $g^{-1}\mathcal Z_0$ at   $z_0 $ is a  smooth linear section of $\mathcal A$.  However, it is not obvious that  $\mathcal A_0$ is invariant under the action of the Borel subgroup $H \cap B$ of $H$ (the same arguments in the proof of Proposition \ref{necessary conditions} do not apply  because $\mathcal Z$ is no longer a rational homogeneous manifold).

\begin{lemma} \label{necessary conditions inductive step}
$\mathcal A_0$ is invariant under the action of $H \cap B$.
\end{lemma}

Together with the fact that $\mathcal A_0$  is smooth, we get   that $\mathcal A_0$ is  the closure of an $H\cap B$-orbit in $\mathcal A$.
Now $\mathcal A=(B_2, \alpha_1, \alpha_2)=(C_2, \alpha_2, \alpha_1)$ is the  odd symplectic Grassmannian $Gr_{\omega}(2, \mathbb C^5)$, smooth orbit closures of a Borel subgroup of $H$ (of $B_2$-type) in $\mathcal A $ other than $\mathcal A$ itself are linear. Therefore,  $\mathcal A_0$ is linear and hence $\mathcal Z_0$ is linear. Consequently, $S_0$ is linear. This completes the proof of   Proposition \ref{classification F4 alpha4}.
\end{proof}

In the remaining section we will prove Lemma  \ref{necessary conditions inductive step}. We will consider $S=G/P$ as a hyperplane section of a rational homogeneous manifold $S'=G'/P'$ associated to a long simple root, whose variety of minimal rational tangent is again a rational homogeneous manifold of Picard number one. \\

The semisimple part of the reductive part $L'$ of $P'$ is of type $D_5$
and the variety $\mathcal Z'$ of minimal rational tangents of $S'$ at $x_0$
is the $L'$-orbit of $z_0':=[v'_5]$  in $\mathbb P(V')$, where $V'$ is the $D_5$-representation space of highest weight $\varpi_5$ and $v'_5$ is  a highest weight vector in $V'$.
Since $S$ is a hyperplane section of $S'$, $\mathcal Z$ is a hyperplane section of $\mathcal Z'$, too. The reason why we introduce $S'$ is that   its variety $\mathcal Z'$ of minimal rational tangents  is   a rational homogeneous manifold, so that we can apply arguments in Section \ref{section vmrt of Schubert  varieties} to the closures of $L'\cap B'$-orbits in $\mathcal Z'$,  while the variety $\mathcal Z$ of minimal rational tangents of $S$ is not.

We will identify $z_0$ with $z_0'$ so that $\mathcal Z$ is the hyperplane section of $\mathcal Z'$ by $\mathbb P(V)$ as follows.  As a representation space of $D_4$, $V'$ is the direct sum $V''(\varpi_3) \oplus V''(\varpi_4)$, where $V''(\varpi_i)$ is the $D_4$-representation space of highest weight $\varpi_i$ for $i=1, \dots, 4$, and as a $D_4$-variety, $\mathcal Z'$ is isomorphic to the closure of $L''$-orbit of $[v''_4+v''_3]$ in $\mathbb P(V''(\varpi_4) \oplus V''(\varpi_3))$, where $v''_i$ is a highest weight vector in $V''(\varpi_i)$ for $i=1,\dots, 4$ (Proposition \ref{homogeneous horospherical varieties}). Since   $\mathcal Z'$ is homogeneous, we may identify $z_0'=[v_5']$ with $[v_4'']$. If we identify $z_0 $ with $z_0' $,
$V$ is a hyperplane of $V'$ and $\mathcal Z$ is the hyperplane section of $\mathcal Z'$ by $\mathbb P(V)$. The embedding of $\mathcal Z$ into $\mathcal Z'$ is that of $(B_3, \alpha_1, \alpha_3)$ into $(D_4, \alpha_4,\alpha_3)=(D_5, \alpha_5)$.

Let $Q'$ be the  isotropy group of $L'$ at $z_0'$.
 The semisimple part $H'$ of the reductive part of $Q'$   is of type $A_4$ and
the variety $\mathcal A'$ of minimal rational tangents of $\mathcal Z'$ at $z_0'$ is the $H'$-orbit of $[u_2']$, where $u_2'$ is a highest weight vector of $A_4$-representation space $U'$ of highest weight $\omega_2$.
 The semisimple part $H''$ of the reductive part of the isotropy group of $L''$ at $z_0'$ is of type $D_3=A_3$.
As an $A_3$-representation space $U'$ is the direct sum $U''(\varpi_1) \oplus U''(\varpi_2)$, where $U''(\varpi_i)$ is the $A_3$-representation space of highest weight $\varpi_i$ for $i=1,2,3$, and as an $A_3$-variety $\mathcal A'$ is isomorphic to the closure of $A_3$-orbit of $[u''_1+u''_2]$, where $u''_i$ is a highest weight vector of $U''(\varpi_i)$ for $i=1,2,3$ (Proposition \ref{homogeneous horospherical varieties}). As before, if we identify $[u_1]$ with $[u''_1]$, $U$ is a hyperplane of $U'$ and $\mathcal A$ is the hyperplane section of   $\mathcal A'$ by $\mathbb P(U)$.
   The embedding of $\mathcal A$ into $\mathcal A'$ is the embedding of $(B_2, \alpha_1, \alpha_2)$ into $(D_3, \alpha_3, \alpha_2)=(A_3, \alpha_1, \alpha_2)=(A_4, \alpha_2)$. \\

\noindent {\it Proof of Lemma \ref{necessary conditions inductive step}.}
Let $S_0' =S'(w')$, $w' \in \mathcal W^{P'}$,  be the  Schubert variety of $S'$ corresponding to $S_0$.
From  $S_0 = S_0'\cap \mathbb P(W) $ it follows that
 $\mathcal Z_0 =\mathcal Z_0'  \cap \mathbb P(T_{x_0}w^{-1}S_0) =\mathcal Z_0' \cap \mathbb P(T_{x_0}S)$.
 $\mathcal Z_0'$ may have more than one irreducible components, but, since $\mathcal Z_0$ is smooth,  there is an irreducible component ${\mathcal Z'_0}^0$ of $\mathcal Z_0'$ such that $\mathcal Z_0 ={\mathcal Z'_0}^0 \cap \mathbb P(T_{x_0}S)$.
   By the invariance of $\mathcal Z_0' $   under the action of $L' \cap B'$ (Proposition \ref{necessary conditions}), ${\mathcal Z'_0}^0$ is the closure of an $L'\cap B'$-orbit in $\mathcal Z'$, i.e., a Schubert variety of $\mathcal Z'$. By Proposition \ref{necessary conditions} again, for a general point $g' z_0'$ in ${\mathcal Z'_0}^0$, $\mathcal C_{g' z_o'}( {\mathcal Z'_0}^0)$ is invariant under the action of $g'(H' \cap B')$ and thus ${\mathcal A_0'}^0:=\mathcal C_{ z_o'}( {g'}^{-1}{\mathcal Z'_0}^0)$ is invariant under the action of $ H' \cap B' $. Now
 $\mathcal Z_0 
 =\mathcal Z_0'  \cap \mathbb P(T_{x_0}S)$, we have $\mathcal A_0 
 = {\mathcal A_0'}^0 \cap \mathbb P(T_{z_0} \mathcal Z ) $.   Since ${\mathcal A_0'}^0$ is invariant under the action of $H' \cap B'$, $\mathcal A_0$ is invariant under the action of $H \cap B =(H' \cap B') \cap H$.
 \qed \\

\vskip 10 pt

 \noindent {\it Proof of Theorem \ref{theorem main}.} By Proposition \ref{classification F4 alpha4}  any smooth Schubert variety of $S$ of type $(F_4, \alpha_4)$ is linear.
Now the first statement follows from  Proposition \ref{classification F4 alpha3}, and the second statement follows from Proposition \ref{rigidity F4 alpha3}.
\qed \\
 





\begin {thebibliography} {XXX}

\bibitem {BiPo} S. Billey and A. Postnikov, \emph{Smoothness of Schubert varieties via patterns in root subsystems}, Adv. in Appl. Math. 34 (2005), no. 3, 447--466.

\bibitem {HoM}    J. Hong  and  N. Mok   \emph{Characterization of smooth Schubert varieties in rational homogeneous manifolds   of Picard number 1},  J. Algebraic Geom. 22  333--362 (2013)

\bibitem{HoPa} J. Hong and K.-D. Park, \emph{Characterization of Standard Embeddings between Rational
Homogeneous Manifolds of Picard Number 1}, Int. Math. Res. Notices, {\bf 2011} 2351--2373 (2011)



\bibitem{HoH} J. Hong, \emph{Smooth horospherical varieties of Picard number one as linear sections of rational homogeneous varieties}, J. Korean Math. Soc. 53  no. 2, 433--459 (2016)

\bibitem{HoC} J. Hong, \emph{Classification of smooth Schubert varieties in the symplectic Grassmannian}, J. Korean Math. Soc. 52  no. 5, 1109--1122   (2015)

\bibitem  {HwM99} J.-M. Hwang and N. Mok, \emph{Varieties of minimal rational tangents on uniruled projective manifolds}, in:M. Schneider, Y.-T. Siu (Eds.), Several complex variables, MSRI publications, Vol 37, Cambridge University Press, 1999, pp. 351 -- 389.

 \bibitem {HwM02} J.-M.  Hwang and N. Mok, \emph{Deformation rigidity of the rational homogeneous space associated to a long simple root}, Ann. Scient. \'Ec. Norm. Sup., $4^e$ s\'erie, t. 35. 2002, p. 173 \'a 184

 \bibitem {HwM04b} J.-M., Hwang and N. Mok,  \emph{Deformation rigidity of the 20-dimensional $F\sb 4$-homogeneous space associated to a short root}. Algebraic transformation groups and algebraic varieties, 37--58, Encyclopaedia Math. Sci., 132, Springer, Berlin, 2004.

\bibitem  {HwM05} J.-M. Hwang and N. Mok, \emph{Prolongations of infinitsimal linear automorphisms of projective varieties and rigidity of rational homogeneous spaces of Picard number 1 under K\"ahler deformation}, Invent. Math. 160 (2005), no. 3, 591--645.

\bibitem {LM} J.M Landsberg and L. Manivel, \emph{On the projective geometry of rational homogeneous varieties},  Comment. Math. Helv. 78 (2003), no. 1, 65--100

\bibitem   {Mi} I. A. Mihai, \emph{Odd symplectic flag manifolds}, Transform. Groups 12 (2007), no. 3, 573--599.
 
\bibitem{Mk16} N. Mok, \emph{Geometric structures and substructures on uniruled projective manifolds}, Foliation Theory in Algebraic Geometry (Simons Symposia), P. Cascini, J. McKernan and J.V.Pereira, Springer-Verlarg, 103--148, 2016.

\bibitem{Pa} B. Pasquier,  \emph{On some smooth projective two-orbit  varieties with Picard number one}, Math. Ann.  344, no. 4,   963--987  (2009)

\bibitem{Po} P. Polo,  On Zariski tangent spaces of Schubert varieties, and a proof of a conjecture of Deodhar. Indag.Mathem., Volume 5, Issue 4,  483--493,  (1994)

\bibitem{RiSl} E. Richmond and W. Slofstra, \emph{Billey-Postnikov decompositions and the fiber bundle structure of Schubert varieties}, Math. Ann. (2016) 366:31-55



\end {thebibliography}

\end{document}